\newtheorem{theorem}{Theorem}[section]
\newtheorem{definition}[theorem]{Definition}
\newtheorem{lemma}[theorem]{Lemma}
\newtheorem{proposition}[theorem]{Proposition}
\newcommand{\E}[0]{\mathbb{E}}
\renewcommand{\P}[0]{\mathbb{P}}
\newcommand{\bbone}{\text{\usefont{U}{bbold}{m}{n}1}}
\newcommand{\ceil}[1]{\left\lceil #1 \right\rceil}
\DeclarePairedDelimiterX{\expectarg}[1]{[}{]}{%
  \ifnum\currentgrouptype=16 \else\begingroup\fi
  \activatebar#1
  \ifnum\currentgrouptype=16 \else\endgroup\fi
}
\title{A simpler path to Ergodic Theorems for the Frontier of Branching Brownian Motion}
\author{Gabriel Flath\thanks{Department of Statistics, University of Oxford, gabriel.flath@stats.ox.ac.uk}}
\begin{document}
\maketitle
\begin{abstract}
We revisit the ergodic theorem for the frontier of branching Brownian motion (BBM). Motivated by the proof of Arguin, Bovier, and Kistler \cite{arguin2012ergodic}, we provide a shorter and more direct argument. It relies on two observations: pairs of extremal particles observed at well-separated times must have branched early, and pairs of early-branching extremal particles have negatively correlated positions. This yields the ergodic theorem for BBM and extends it to a broad class of functionals of the recentred maximum. We also address a gap in the path localization argument of \cite{arguin2012ergodic}.

\end{abstract}


\section{Introduction and results}
The (standard binary) Branching Brownian motion (BBM) is a fundamental stochastic process describing the evolution of particles that move according to independent Brownian motions and undergo (binary) branching. It occupies a central position in probability theory and has deep connections with population models, reaction–diffusion equations, and disordered systems. The process starts from a single particle at the origin performing standard Brownian motion; each particle lives for an exponentially distributed time (with mean one), then splits into two offspring located at its current position. The descendants evolve independently according to the same dynamics.
\\

More formally, let $\mathcal{N}_t$ be the collection of particles at time $t$. For $v \in \mathcal{N}_t$ and $s\in \left[0,t\right]$, we say that $u\in \mathcal{N}_s$ is its ancestor at time $s$ if $v$ is a descendant of $u$ and denote it $u\preceq v$. For $u\in \mathcal{N}_t$ and $s\in \left[0,t\right]$, we write $X_u(s) \in \mathbb{R}$  for the position of particle $u$ (or its ancestor) at time $s$. We denote by $M_t := \max_{u\in \mathcal{N}_t} X_u(t)$ the position of the rightmost particle at time $t$. We define the branching Brownian motion process to be $X(t)=(X_u(t), u \in \mathcal{N}_t)$ and its natural filtration $\mathcal{F}_t=\sigma\{X(s), s\leq t \}$.\\

In a series of seminal works, Bramson~\cite{Bramson1983ConvergenceOS} showed that the position $M_t$ of the rightmost particle in a BBM is centred around $m_t := \sqrt 2 t -\frac3{2\sqrt 2 }\log t$, and more precisely that 
\[
M_t - m_t \xrightarrow[t\rightarrow \infty]{d} W,
\]
where $W$ is a random variable whose distribution function is given by the so-called critical travelling wave solution to the Fisher--KPP equation. Lalley and Sellke \cite{Lalley1987ACL} showed that the corresponding pointwise ergodic convergence
 cannot hold, i.e., that for $x\in \mathbb{R}$, $$
F_T(x) := \frac{1}{T} \int_0^T \bbone\{ M_t -m_t\leq x \} \, dt  \not \to \P(W\leq x) \text{ as } T \to \infty \text{ almost surely}. $$

They also proved that there exists a constant $C_\star>0$ such that
\begin{equation}\label{eq:LS}
     \P( M_t - m_t\le x) \to \E\left[ e^{-C_\star Z_\infty e^{-\sqrt 2 x}} \right],
\end{equation} 
where  $Z_\infty>0$ is the almost sure limit of the derivative martingale:
\begin{equation*}
    Z_t=\sum_{u \in \mathcal{N}_t}(\sqrt{2}t - X_u(t))e^{\sqrt{2}(X_u(t)-\sqrt{2}t)}.
\end{equation*}
 They further conjectured that this convergence holds in an ergodic sense. This was proved by Arguin, Bovier, and Kistler \cite{arguin2012ergodic}.

\begin{theorem}[Ergodic Theorem for $M_t$, Theorem 1 in \cite{arguin2012ergodic}]\label{thm:ergodic}
For any $x \in \mathbb{R}$,
\[
\lim_{T \to \infty}  F_T(x) = \exp\left(-C_\star Z_\infty e^{-\sqrt{2} x}\right), \quad \text{a.s.}
\]
\end{theorem}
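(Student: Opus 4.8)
The plan is to go through the conditional form of the Lalley--Sellke theorem, reduce the statement to a single two-time estimate, and then upgrade an $L^2$-bound to an almost sure statement by a routine monotonicity argument. Write $G(x):=\exp\!\left(-Ce^{-\sqrt2 x}Z_\infty\right)$ for the claimed limit. In addition to the results of Bramson and Lalley--Sellke quoted above, I will use the following standard facts: the critical additive martingale $W(s):=\sum_{u\in\mathcal N_s}e^{\sqrt2(X_u(s)-\sqrt2 s)}$ tends to $0$ almost surely; $Z(s)\to Z_\infty$ almost surely; and, conditionally, for every fixed $s$ one has $\P(M_t\le m_t+x\mid\mathcal F_s)\to\phi_s(x):=\exp\!\left(-Ce^{-\sqrt2 x}(Z(s)+xW(s))\right)$ almost surely and in $L^1$ as $t\to\infty$, with $\phi_s(x)\to G(x)$ almost surely as $s\to\infty$. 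Writing $\P(M_t\le m_t+x\mid\mathcal F_s)=\E[\P(M_t\le m_t+x\mid\mathcal F_t)\mid\mathcal F_s]$ and passing to the limit also gives $\E[G(x)\mid\mathcal F_s]=\phi_s(x)$, so $(\phi_s(x))_{s\ge0}$ is a bounded martingale.

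The first step is to reduce the theorem to the two-time estimate
\[
\lim_{\substack{t,t'\to\infty\\ |t-t'|\to\infty}}\P\!\left(M_t\le m_t+x,\ M_{t'}\le m_{t'}+x\right)=\E\!\left[\exp\!\left(-2Ce^{-\sqrt2 x}Z_\infty\right)\right]=\E\!\left[G(x)^2\right],
\]
ideally with a polynomial rate in $\min(t,t')\wedge|t-t'|$. Indeed $\E[F_T(x)^2]=\tfrac1{T^2}\int_0^T\!\!\int_0^T\P(M_t\le m_t+x,M_{t'}\le m_{t'}+x)\,dt\,dt'$, and splitting off the set where $\min(t,t')\wedge|t-t'|$ is small (measure $O(LT)$) gives $\E[F_T(x)^2]\to\E[G(x)^2]$; moreover $\E[\mathbf{1}\{M_t\le m_t+x\}\,\phi_t(x)]=\lim_{t'\to\infty}\P(M_t\le m_t+x,M_{t'}\le m_{t'}+x)$ (using $\phi_t(x)=\lim_{t'\to\infty}\P(M_{t'}\le m_{t'}+x\mid\mathcal F_t)$ and the tower property), and $\E[F_T(x)G(x)]=\tfrac1T\int_0^T\E[\mathbf{1}\{M_t\le m_t+x\}\,\phi_t(x)]\,dt$ (using $\phi_t(x)=\E[G(x)\mid\mathcal F_t]$ and Fubini), so the same input yields $\E[F_T(x)G(x)]\to\E[G(x)^2]$. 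Hence $\E[(F_T(x)-G(x))^2]=\E[F_T(x)^2]-2\E[F_T(x)G(x)]+\E[G(x)^2]\to0$. With a polynomial rate one then runs Borel--Cantelli along a sequence $T_n$ growing polynomially with $T_{n+1}/T_n\to1$, and the monotonicity of $T\mapsto\int_0^T\mathbf{1}\{M_t\le m_t+x\}\,dt$ interpolates to all $T$, giving $F_T(x)\to G(x)$ almost surely.

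The genealogy enters in proving the two-time estimate. Let $d=t\wedge t'$, $d'=t\vee t'$, $\ell=d'-d$, and condition on $\mathcal F_r$ for a time $r$ sent to infinity slowly (with $r=o(d)$, after $t,t'$ and $\ell$). With $a_u=m_d+x-X_u(r)$ and $b_u=m_{d'}+x-X_u(r)$, conditional independence of the subtrees rooted at $\mathcal N_r$ gives $\P(M_d\le m_d+x,\ M_{d'}\le m_{d'}+x\mid\mathcal F_r)=\prod_{u\in\mathcal N_r}q^u$, where $q^u=\P(M^u_{d-r}\le a_u,\ M^u_{d'-r}\le b_u\mid\mathcal F_r)$. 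Decompose $q^u=P^u_1P^u_2+(J^u-e^u_1e^u_2)$, with $P^u_i=1-e^u_i$ the two one-time probabilities for subtree $u$ and $J^u$ the probability that subtree $u$ overshoots at both times; then $\prod_uP^u_1\to\phi_r(x)$ and $\prod_uP^u_2\to\phi_r(x)$ (hence $\to G(x)$), and everything rests on showing $\sum_u(J^u-e^u_1e^u_2)\to0$. This is where the two observations are used: in a single BBM, two extremal particles observed at times a distance $\ell$ apart have their most recent common ancestor near the root, so inside subtree $u$ the joint overshoot $J^u$ splits according to the branching time of the two offending lineages — a late branching forces the later particle to clear the typical front by an extra $\tfrac3{2\sqrt2}\log\frac{\ell d}{d'}\to\infty$ and therefore costs a vanishing factor, while an early branching produces two essentially independent lineages whose positions are, under the conditioning $\{M_d\le m_d+x\}\cap\{M_{d'}\le m_{d'}+x\}$, even negatively correlated. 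In either case $\sum_u(J^u-e^u_1e^u_2)\to0$, so $\prod_uq^u\to G(x)^2$, and dominated convergence ($0\le\prod_uq^u\le1$) yields the two-time estimate; tracking the inputs quantitatively supplies the polynomial rate.

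The hard part will be the genealogical input itself — that extremal particles observed at two far-apart times must have branched near the root, together with the quantitative control of the residual correlation of such early-branching lineages. This relies on localizing the ancestral paths of extremal particles on the whole time interval, and it is exactly at this point that the argument of \cite{arguin2012ergodic} had a gap; handling the path-localization step carefully is the technical heart of the proof.
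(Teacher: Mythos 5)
Your high-level reduction is genuinely different from the paper's, and the difference is not just cosmetic: the paper keeps the Lalley--Sellke convergence \emph{out} of the $L^2$ bound, whereas your scheme forces it in, which introduces a rate requirement you have not addressed. Concretely, the paper decomposes $F_T(x)$ at \eqref{eq:empdist} into a piece $\frac1T\int_{\varepsilon T}^T\mathbb P(M_t-m_t\le x\mid\mathcal F_{R_T})\,dt$, which converges to $G(x)$ almost surely by Theorem~2 of \cite{arguin2012ergodic} \emph{without any rate}, plus a centred fluctuation piece, to which the genealogical second-moment bound is applied. You instead write $\E[(F_T-G)^2]=\E[F_T^2]-2\E[F_T G]+\E[G^2]$ and reduce everything to a two-time estimate. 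Unpacking that two-time estimate (condition on $\mathcal F_r$, decorrelate, send $t,t',r\to\infty$), you need not only the genealogical decorrelation but also a quantitative version of $\phi_r=\E[G\mid\mathcal F_r]\to G$, i.e.\ a rate for the derivative-martingale convergence $Z(r)\to Z_\infty$ and for the conditional Bramson convergence $\P(M_t\le\cdot\mid\mathcal F_r)\to\phi_r$. You acknowledge this with an ``ideally with a polynomial rate'', but this is precisely the step the paper's decomposition is designed to avoid, and you neither prove such a rate nor point to one in the literature. As a secondary matter, your stated formula $\phi_s(x)=\exp(-Ce^{-\sqrt2 x}(Z(s)+xW(s)))$ is not the correct conditional limit law (that law is $\prod_{u\in\mathcal N_s}\P(W\le\sqrt2 s-X_u(s)+x)$, which does converge to $G(x)$ by Lalley--Sellke's cascading argument, but does not collapse to your closed form); this is fixable but signals that this ingredient was not verified.

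On the genealogical core, you correctly identify the two mechanisms (late MRCA costs an extra factor; early MRCA gives decorrelated lineages) and correctly flag path localization as the technical heart and the source of the gap in \cite{arguin2012ergodic}. But your decomposition $q^u=P_1^uP_2^u+(J^u-e_1^ue_2^u)$ and the claim $\sum_u(J^u-e_1^ue_2^u)\to 0$ is left essentially as an assertion, and your phrase ``under the conditioning\ldots even negatively correlated'' is not a correct mechanism: the negative correlation used in the paper is \emph{unconditional} and comes from a completely explicit trick, namely Lemma~\ref{lem:BK-cross} (a van den Berg--Kesten--Reimer argument for cross-index unions), which gives, conditionally on $\mathcal F_R$, the clean inequality of Proposition~\ref{thm:decorrelation}: if the rightmost particles at times $s$ and $t$ separate before $R$, their tail events are negatively correlated. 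Your within-subtree analysis, if pushed through, would need exactly this BKR input for the early-branching case plus the quantitative early-branching estimate (Theorem~\ref{thm:earlybr}, itself resting on Lemmas~\ref{gih} and~\ref{lemma-earlybrloc}) for the late-branching case. So the technical work does not disappear under your reorganization; rather, one extra quantitative ingredient (rate for $\phi_r\to G$) is added. The Borel--Cantelli-plus-interpolation step is fine in spirit and matches Lemma~\ref{subtoful}, but note that the paper only achieves a logarithmic-in-$T$ rate for the fluctuation term, so the subsequence must be $T_n=e^{n^\beta}$, $\beta\in(0,1)$, not polynomial; this is still compatible with interpolation since $T_{n+1}/T_n\to1$, so your ``polynomial'' stipulation is both stronger than what is available and not actually needed.
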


The ergodic theorem extends to a broad class of functionals of the recentred maximum.

\begin{theorem}[Functional ergodic theorem]\label{thm:functional_ergodic}
Let $f:\mathbb{R}\to\mathbb{R}$ be measurable, with discontinuity set
$D_f$ of Lebesgue measure zero.
Assume that there exist constants $A>0$, $\alpha<\sqrt{2}$,
and $\beta<\sqrt{2}$ such that
\[
|f(x)|\le Ae^{\alpha x},\quad x\ge 0,
\qquad\text{and}\qquad
|f(x)|\le A\exp\!\big(e^{-\beta x}\big),\quad x\le 0.
\]
Then, almost surely,
\[
\frac{1}{T}\int_0^T f(M_t-m_t)\,dt
\;\xrightarrow[T\to\infty]{}\;
\E\!\left[
f\!\left(\frac{\mathcal{G}+\log C_\star+\log Z_\infty}{\sqrt{2}}\right)
\,\middle|\,Z_\infty\right],
\]
where $\mathcal{G}$ is a standard Gumbel random variable
independent of~$Z_\infty$, and $C_\star>0$ is the constant
in~\eqref{eq:LS}.
\end{theorem}

We give a shorter and more direct proof of Theorem~\ref{thm:ergodic}. The negative correlation inequality at the core of the argument (Proposition~\ref{thm:decorrelation}) applies beyond indicator functions, yielding Theorem~\ref{thm:functional_ergodic}. The argument distinguishes whether pairs of particles branch early, which along the way yields results on the genealogy of extremal particles that are of independent interest.

For $(u,v)\in \mathcal N_s\times\mathcal N_t$ we define their splitting time
\[
    Q(u,v) := \sup\{\gamma \ge 0 : X_u(\phi)=X_v(\phi)\ \text{for all } \phi \leq \gamma\}.
\]

For $x\in \mathbb{R}$ and $t\ge 0$, set 
$\mathcal N_t(x):=\{u\in\mathcal N_t:\ X_u(t)-m_t\ge x\}$, the set of particles above $m_t +x$ at time $t$.

The next result quantifies the fact that pairs of extremal particles observed at well-separated times must have branched early.

\begin{theorem}[Early branching across timescales]\label{thm:earlybr}
Let {$x\in\mathbb{R}$ and} $0\leq\rho<1/(2\sqrt2)$. For every $\varepsilon\in(0,\,1/2-\sqrt2\rho)$,
there exists $\ell>0$ such that for all $R\ge \ell$ and all $0<s<t$ with
$s\ge \ell$ and $t-s\ge \ell$,
\begin{equation}\label{eq:EB-moving}
\mathbb P\!\left(\exists (u,v)\in \mathcal N_s({x-\rho\log m})\times \mathcal N_t({x-\rho\log m}):
Q(u,v)\ge R\right)
\le m^{-1/2+\sqrt2\rho+\varepsilon},
\end{equation}
where $m:=\min\{t-s,\,s,\,R\}$.
\end{theorem}


\section{Proof of Theorems~\ref{thm:ergodic} and~\ref{thm:earlybr}}\label{proofff}
In this section, we establish Theorems~\ref{thm:ergodic} and~\ref{thm:earlybr}, proceeding via several intermediate results concerning the path localisation and the genealogy of the particles. The proof of Theorem~\ref{thm:functional_ergodic} is deferred to Section~\ref{sec:functional-proof}. The proofs of some intermediate results are deferred to Section~\ref{Sec:proofprop}.

We now introduce a path localisation and state two intermediate lemmas before turning to the proof of Theorem~\ref{thm:earlybr}.
\begin{definition}[$\alpha$-$t$-localised particle]
Fix $0<\alpha<1/2$. Let $t>0$ and $r\in[0,t/2]$. 
A particle $u \in \mathcal{N}_t$ is said to be \emph{$\alpha$-$t$-localised }on $[r,t-r]$ if, for every $s\in[r,t-r]$,
\[
X_u(s) \;\le\; \frac{s}{t}\,m_t \;-\; \min\{s,\,t-s\}^{\alpha}.
\]
Otherwise, $u$ is said to be \emph{not localised} on $[r,t-r]$.
\end{definition}

The following lemma is a path localisation estimate, in the spirit of Theorem~2.3 in \cite{Arguin2016}, but stated here with an explicit bound. The proof relies on an entropic repulsion argument implemented with two time scales, following the approach of Proposition 4.1 in \cite{flath1}. This yields the following bound.

\begin{lemma}\label{gih}
Let $0<\alpha< 1/2${, $x\in\mathbb{R}$,} and $0\leq\rho<1/\sqrt{2}$.
For any $\delta>0$, there exists $r_0$ such that, for all $r\ge r_0$ and $t\ge 3r$,
\begin{equation}\label{ojer}
\begin{split}
\mathbb{P}\!\Bigl(& \exists\, u \in \mathcal{N}_t({x-\rho\log r}) \text{ not $\alpha$-$t$-localised on } [\,r,\,t-r\,]\Bigr)
\;\le\; r^{\alpha +\sqrt{2}\rho - \tfrac12 + \delta}.
\end{split}
\end{equation}
\end{lemma}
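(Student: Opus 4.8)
The plan is to establish (\ref{ojer}) by a truncated first-moment computation carried out scale by scale. First, two reductions. The interval $[r,t-r]$ splits into the two halves $[r,t/2]$ and $[t/2,t-r]$, interchanged by reversing the ancestral lines from time $t$; since the barrier $s\mapsto \tfrac st m_t-\min\{s,t-s\}^\alpha$ is, up to lower-order terms, invariant under $s\leftrightarrow t-s$, it suffices to bound the probability of a barrier crossing at some $s\in[r,t/2]$. Cover this range by the dyadic blocks $I_k=[2^k,2^{k+1}]$ with $2^k\ge r/2$; on $I_k$ one has $\min\{s,t-s\}^\alpha=s^\alpha\asymp 2^{k\alpha}$, and a union bound reduces the claim to controlling, for each such $k$,
\[
p_k:=\mathbb P\Bigl(\exists\,u\in\mathcal N_t(x)\ \exists\,s\in I_k:\ X_u(s)\ge \tfrac st m_t-\tfrac12 s^\alpha\Bigr),
\]
where the oscillation of Brownian motion over the unit-length subintervals of $I_k$ is absorbed into the moment estimate (equivalently, one works with a slightly shifted moving boundary, so that $p_k$ becomes a first-passage quantity).

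The heart of the matter is to show that, on a suitable good event, $p_k$ is at most a constant (times a power of $k$) times $e^{-\sqrt2 x}\,2^{k(\alpha-1/2)}$. A plain first moment does not suffice: $\mathbb E\bigl[\#\{u\in\mathcal N_t(x):u\text{ not localized}\}\bigr]$ is of the same order as $\mathbb E[\#\mathcal N_t(x)]\asymp t\,e^{-\sqrt2 x}$, being inflated by the (rare) event that some particle climbs anomalously high early on. Following the two-time-scale entropic-repulsion scheme of Proposition~4.1 in \cite{flath1}, one introduces a coarser time $\rho_k\asymp 2^k$ (a fixed fraction of $2^k$, or a slightly smaller power of it) and a good event $G_k$ controlling the branching Brownian motion up to time $\rho_k$ -- in the spirit of the truncations of \cite{arguin2012ergodic}, for instance a bound on the relevant truncated additive/derivative martingale or a path-ceiling valid up to $\rho_k$ -- chosen so that $\sum_k\mathbb P(G_k^c)$ stays below the target bound. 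On $G_k$ one estimates $p_k$ by the many-to-one lemma, splitting an offending ancestral line at its crossing of the barrier inside $I_k$: the segment up to the crossing contributes a Gaussian ballot-type probability for a Brownian path to stay below the (essentially linear) barrier $\sqrt2\,s-\tfrac12 s^\alpha$ and to end within $O(s^\alpha)$ of it, which -- since $s^\alpha$ lies well below the fluctuation scale $\sqrt s$, and here $\alpha<1/2$ is used -- carries a factor of order $s^{\alpha-1/2}$; the segment up to time $t$ contributes the tail bound $\mathbb P(M_\tau\ge m_\tau+z)\le C(1+z_+)e^{-\sqrt2 z}$, in a refined form valid on $G_k$ so as not to lose an extra power of $s^\alpha$. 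After the standard completion-of-the-square cancellation of the $m_t$-corrections between the two segments, the prefactors combine to the claimed bound.

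Summing over scales concludes the argument. Since $\alpha-\tfrac12<0$, the series $\sum_{k:\,2^k\in[r,t/2]}2^{k(\alpha-1/2)}$ is geometric and dominated by its first term, of order $r^{\alpha-1/2}$, while the logarithmic losses and the probabilities $\mathbb P(G_k^c)$ are swallowed by the free exponent $\delta$; this gives $\mathbb P\bigl(\exists\,u\in\mathcal N_t(x)\text{ not localized on }[r,t-r]\bigr)\le r^{\alpha-1/2+\delta}$ for $r\ge r_0(\delta,\alpha,x)$.

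The step I expect to be the main obstacle is the interplay between the truncation and the ballot estimate. The good event $G_k$ must be strong enough that, conditionally, the first moment is genuinely of size $e^{-\sqrt2 x}2^{k(\alpha-1/2)}$ rather than merely bounded, as in the untruncated computation, yet weak enough that $\sum_k\mathbb P(G_k^c)$ stays below $r^{\alpha-1/2+\delta}$; achieving this balance is exactly what the two-time-scale device of \cite{flath1} is for. Extracting the sharp exponent $\alpha-\tfrac12$, rather than a qualitative $o(1)$, also requires tracking the competition between the barrier depth $\min\{s,t-s\}^\alpha$ and the Brownian fluctuation $\sqrt{\min\{s,t-s\}}$ with care, especially at the two endpoints of $[r,t-r]$ and for $s$ close to $r$, where the envelope is shallowest and the estimate tightest.
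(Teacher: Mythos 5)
Your overall strategy — many-to-one after a truncation, entropic repulsion at two time scales, the heuristic that the conditional probability of being within $s^\alpha$ of the barrier at time $s$ is $\sim s^{\alpha-1/2}$ — matches the paper's mechanism, and your diagnosis that the plain first moment is inflated to order $t\,e^{-\sqrt2x}$ is correct. However, the organization is genuinely different and, as you submit it, it has a gap exactly at the step you flag as the ``main obstacle.''

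The paper does \emph{not} decompose dyadically. After truncating to particles that stay below a shallower upper envelope $\frac{s}{t}m_t+\Gamma_{t,\gamma}(s)$ (with $\gamma<\alpha$, controlled by Lemma~\ref{lemmarg}) and end in a bounded window above $m_t+x$ (controlled by Bramson's right-tail bound), it applies the many-to-one lemma and factorises the resulting bridge probability into $\mathbb P^{t,z}_{0,0}(\text{stay above }-\Gamma_{t,\gamma}\text{ on }I_r)$ times the \emph{conditional} probability of dipping below $+\Gamma_{t,\alpha}$ somewhere on $I_R$. The first factor is of order $r/t$ via Lemma~\ref{34Arguin}; the entire exponent $\alpha-\tfrac12+\lambda$ comes from the second factor in a single stroke, via the monotonicity Lemmas~\ref{monoticitypath},~\ref{prop63bramson} and, crucially, Bramson's Proposition~6.1 (Lemma~\ref{prop61bramson}), which gives
\[
\frac{\P^{t,0}_{0,0}(\forall s\in I_R:\,B_s\ge+\Gamma_{t,\alpha})}{\P^{t,0}_{0,0}(\forall s\in I_R:\,B_s\ge-\Gamma_{t,\alpha})}\;\ge\;1-R^{\alpha-\tfrac12+\delta}
\]
as a black box. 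The two time scales $r=R^{\epsilon}\ll R$ serve only to extend the entropic ceiling slightly past the localization window and to make the endpoint window $[x,r^{\gamma/2}]$ negligible in the tail bounds, not to index a scale-by-scale sum.

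Where your proposal is short is precisely at the per-block claim $p_k\lesssim k^{C}e^{-\sqrt2x}2^{k(\alpha-1/2)}$: proving this on a concretely specified good event $G_k$ would amount to reproving a localized version of Lemma~\ref{prop61bramson}, which is the single nontrivial analytic input and which your sketch neither cites nor replaces. Two further points would need attention if you pursued the dyadic route. First, the good event must enter the many-to-one computation as a \emph{per-particle path constraint}, as the paper's ceiling does; a global event ``controlling the BBM up to time $\rho_k$'' does not commute with the spinal change of measure in the way your sketch suggests. Second, your ceiling ``valid up to $\rho_k\asymp2^k$'' is too short: to get the conditional bound on block $I_k$ one needs the entropic repulsion active over the \emph{whole} of $[r,t-r]$ (or at least up to $2^{k+1}$ on the left \emph{and} symmetrically on the right), which is why the paper keeps a single ceiling on $I_r$ throughout. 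In short, the skeleton is sound and the heuristics are right, but the decisive estimate is delegated to a sentence, whereas the paper discharges it by quoting a sharp, ready-made ratio estimate and avoids the dyadic bookkeeping altogether.
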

The proof of Lemma~\ref{gih} is deferred to Section~\ref{Sec:proofprop}.

The following lemma quantifies that localised extremal particles at distant timescales cannot have branched too late. Its proof is deferred to Section~\ref{Sec:proofprop}.
\begin{lemma}\label{lemma-earlybrloc}
Let $0<\alpha<1/2${, $x\in\mathbb{R}$,} and let $\rho\geq 0$. There exist constants $C=C({x,}\rho,\alpha)$ and $r_0$ such that, for all $r\ge r_0$, $R\geq2r$, $s\ge3r$ and $t\ge s+2r$,
\begin{equation}\label{eq:path-localization-br}
\begin{split}
\mathbb{P}\!\Bigl(&\exists(u,v)\in\mathcal N_s({x-\rho \log r})\times\mathcal N_t({x-\rho \log r})\colon
u \text{ is $\alpha$--$s$--localised on }[r,s-r], \\
&\quad v \text{ is $\alpha$--$t$--localised on }[r,t-r],
\text{ and } Q(u,v)\ge R\Bigr) \\
&
\;\le\; Cr^{2\sqrt2\rho+\frac12}\left(re^{-\sqrt{2}R^\alpha} +(t-s)e^{-\sqrt{2}(t-s)^\alpha}\right).
\end{split}
\end{equation}
\end{lemma}

We now combine Lemmas~\ref{gih} and \ref{lemma-earlybrloc} to prove Theorem~\ref{thm:earlybr}.

\begin{proof}
Fix $\varepsilon\in(0,1/2-\sqrt2\rho)$. Choose $\alpha\in(0,\varepsilon/2)$ and set $\delta:=\varepsilon/2-\alpha>0$. For $0<s<t$ and $R>0$, define
\[
m:=\min\{t-s,\ s,\ R\},\qquad r:=m/3.
\]
For $m\ge \ell$ (with $\ell$ fixed below), the conditions of Lemmas~\ref{gih} and~\ref{lemma-earlybrloc} are satisfied: $r\ge r_0$, $s\ge 3r,\ R\ge 2r,\ t\ge s+2r$.
{Lemmas~\ref{gih} and~\ref{lemma-earlybrloc} are applied with $x-\rho\log 3$ in place of~$x$, so that the threshold $(x-\rho\log 3)-\rho\log r = x-\rho\log m$ matches the event in~\eqref{eq:EB-moving}.}

Let $E$ be the event in \eqref{eq:EB-moving}. Write
\[
A_s:=\mathbb P\!\left(\exists u\in\mathcal N_s({x-\rho\log m})\ \text{not $\alpha$--$s$--localised on }[r,s-r]\right),
\]
and define $A_t$ similarly. Let
\[
B:=\mathbb P\!\left(\exists (u,v)\in\mathcal N_s({x-\rho\log m})\times \mathcal N_t({x-\rho\log m}):\ u,v\ \text{are localised, and }Q(u,v)\ge R\right)
\]
be the probability in \eqref{eq:path-localization-br}.
Then $\mathbb P(E)\le A_s+A_t+B$.

By Lemma~\ref{gih},
\[
A_s+A_t\le 2\,r^{\alpha+\sqrt2\rho-\frac12+\delta}
\le C_0\,m^{-1/2+\sqrt2\rho+\varepsilon/2}.
\]

By Lemma~\ref{lemma-earlybrloc} and since $m\le R,t-s$, there exists $\ell_1$ such that for $m\ge \ell_1$,
\[
B\le C\,r^{2\sqrt2\rho+\frac12}
\left(re^{-\sqrt2 R^\alpha}
+(t-s)e^{-\sqrt2(t-s)^\alpha}\right)\le
C\,m^{-1/2+\sqrt2\rho+\varepsilon/2}.
\]

Thus, for $m$ large enough (say $m\ge \ell:=\max\{3r_0,\ell_1,(C_0+C)^{2/\varepsilon}\}$),
\[
\mathbb P(E)\le (C_0+C)\,m^{-1/2+\sqrt2\rho+\varepsilon/2}
\le m^{-1/2+\sqrt2\rho+\varepsilon},
\]
which establishes \eqref{eq:EB-moving}.
\end{proof}

We now turn to the proof of Theorem \ref{thm:ergodic}.
We first recall and follow the strategy of Arguin, Bovier, and Kistler \cite{arguin2012ergodic}. 
Let $x\in \mathbb{R}$, we introduce a cutoff parameter $\varepsilon>0$ and split the time integral at $\varepsilon T$: 
\begin{equation}\label{eq:split}
F_T(x)
= \frac{1}{T}\int_{\varepsilon T}^T \bbone\{M_t-m_t\le x\}\,dt
+ \frac{1}{T}\int_0^{\varepsilon T} \bbone\{M_t-m_t\le x\}\,dt.
\end{equation}
The second term does not contribute in the limit as $T \to \infty$ first and $\varepsilon \to 0$ next. It therefore suffices to compute the double limit for the first term. 

To this end, let $R_T>0$ and decompose the empirical distribution on $[\varepsilon T, T]$ as
\begin{align}\label{eq:empdist}
\frac{1}{T}\int_{\varepsilon T}^T \bbone\{M_t - m_t \le x\}&\,dt
= \frac{1}{T}\int_{\varepsilon T}^T 
   \mathbb{P}\!\left(M_t - m_t \le x \,\middle|\, \mathcal{F}_{R_T}\right)\,dt \\
&+ \frac{1}{T}\int_{\varepsilon T}^T 
   \Bigl( \bbone\{M_t - m_t \le x\}
   - \mathbb{P}\!\left(M_t - m_t \le x \,\middle|\, \mathcal{F}_{R_T}\right)\Bigr)\,dt. \nonumber
\end{align}

Theorem 2 in \cite{arguin2012ergodic} established that,
for $R_T \to \infty$ as $T \to \infty$ with $R_T = o(\sqrt{T})$,
\begin{equation}\label{eq:cond-mean-conv}
\lim_{\varepsilon \to 0}\;
\lim_{T \to \infty}\;
\frac{1}{T}\int_{\varepsilon T}^T
\mathbb{P}\!\left(M_t - m_t \le x \,\middle|\, \mathcal{F}_{R_T}\right)\,dt
= \exp\left(-C_\star Z_\infty e^{-\sqrt{2} x}\right),
\quad \text{a.s.}
\end{equation}
 
Consequently, by Theorem~2 of \cite{arguin2012ergodic}, to establish Theorem~\ref{thm:ergodic} it remains only to show that the second term on the right-hand side of \eqref{eq:empdist} converges to zero almost surely. This is the content of the next theorem, whose proof occupies the remainder of this section.

\begin{theorem}\label{thm:fluctuations}
Fix $x\in \mathbb{R}$, $\varepsilon > 0$ and $\delta > 0$, and define $R_T = \log(T)^{2+\delta}$. Then
\[
\lim_{T \to \infty} 
\frac{1}{T}\int_{\varepsilon T}^T 
\Bigl( \bbone\{M_t - m_t \ge x\}
      - \mathbb{P}\!\left(M_t - m_t \ge x \,\middle|\, \mathcal{F}_{R_T}\right)\Bigr)\,dt
= 0, \quad \text{a.s.}
\]
\end{theorem}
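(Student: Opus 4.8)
The plan is a second-moment estimate followed by a Borel--Cantelli and interpolation step. Write $G_t:=\mathbf 1\{M_t-m_t\ge x\}-\mathbb P(M_t-m_t\ge x\mid\mathcal F_{R_T})$ and $H_T:=T^{-1}\int_{\varepsilon T}^T G_t\,dt$. Since $R_T=\log(T)^{2+\delta}\le\varepsilon T$ for $T$ large, $\mathbb E[G_t]=0$, so $\mathbb E[H_T^2]=T^{-2}\int_{\varepsilon T}^T\int_{\varepsilon T}^T\mathbb E[G_sG_t]\,ds\,dt$. The heart of the argument is a bound of the form $\mathbb E[H_T^2]\le C\log(T)^{-1-\gamma}$ for some $\gamma>0$, which is possible precisely because $\delta>0$. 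Granting this, choose $T_n=e^{n^\beta}$ with $\beta<1$ close enough to $1$ that $\sum_n\mathbb E[H_{T_n}^2]<\infty$; Borel--Cantelli then gives $H_{T_n}\to0$ a.s. To pass from the subsequence to all $T$, write $H_T=\bar H_T-\mathbb E[\bar H_T\mid\mathcal F_{R_T}]$ with $\bar H_T:=T^{-1}\int_{\varepsilon T}^T\mathbf 1\{M_t-m_t\ge x\}\,dt$: the function $T\mapsto\bar H_T$ has a deterministic $O(1/T)$ modulus of continuity and $T_{n+1}/T_n\to1$, while for $r$ ranging over $[R_{T_n},R_{T_{n+1}}]$ Doob's $L^2$-inequality controls the oscillation of the martingale $r\mapsto\mathbb E[\bar H_{T_n}\mid\mathcal F_r]$ by $2(\mathbb E[H_{T_n}^2])^{1/2}$ --- the same summable quantity --- so a second Borel--Cantelli yields $H_T\to0$ a.s.

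For the second-moment bound, fix $s\le t$ in $[\varepsilon T,T]$ and condition on $\mathcal F_{R_T}$: by the tower property, $\mathbb E[G_sG_t]=\mathbb E\big[\operatorname{Cov}(\mathbf 1\{M_s-m_s\ge x\},\mathbf 1\{M_t-m_t\ge x\}\mid\mathcal F_{R_T})\big]$. Given $\mathcal F_{R_T}$ the subtrees rooted at the $w\in\mathcal N_{R_T}$ are independent; let $\xi_w,\eta_w$ be the indicators that $w$ has a descendant in $\mathcal N_s(x)$, resp.\ in $\mathcal N_t(x)$, and $p_w,q_w,r_w$ the $\mathcal F_{R_T}$-conditional probabilities of $\{\xi_w=1\}$, $\{\eta_w=1\}$, $\{\xi_w=\eta_w=1\}$. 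Since $\{M_s-m_s<x\}=\bigcap_w\{\xi_w=0\}$, independence gives $\operatorname{Cov}(\cdot\mid\mathcal F_{R_T})=\prod_w(1-p_w-q_w+r_w)-\prod_w(1-p_w)(1-q_w)$, and telescoping a product of factors in $[0,1]$,
\[
\bigl|\operatorname{Cov}(\cdot\mid\mathcal F_{R_T})\bigr|\le\sum_w\bigl|r_w-p_wq_w\bigr|\le\sum_w r_w+\sum_w p_wq_w .
\]
Together with $|\operatorname{Cov}|\le1$ and $\min\{1,a+b\}\le\min\{1,a\}+b$ this gives the key inequality
\[
\bigl|\mathbb E[G_sG_t]\bigr|\le\mathbb E\Bigl[\min\bigl\{1,\textstyle\sum_w r_w\bigr\}\Bigr]+\mathbb E\Bigl[\textstyle\sum_w p_wq_w\Bigr].
\]
The first term is the \emph{early-branching} term: a subtree with $\xi_w=\eta_w=1$ is exactly a common ancestor at time $R_T$ of a pair $(u,v)\in\mathcal N_s(x)\times\mathcal N_t(x)$, i.e.\ a pair with $Q(u,v)\ge R_T$. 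The second is a \emph{competition} term, produced by extremal particles at times $s$ and $t$ coming from different subtrees.

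For the early-branching term, the inequality $1-e^{-y}\ge(1-e^{-1})y$ on $[0,1]$ gives $\min\{1,\sum_w r_w\}\le(1-e^{-1})^{-1}\bigl(1-\prod_w(1-r_w)\bigr)$, whose expectation is $(1-e^{-1})^{-1}\,\mathbb P\bigl(\exists(u,v)\in\mathcal N_s(x)\times\mathcal N_t(x):Q(u,v)\ge R_T\bigr)$; applying Theorem~\ref{thm:earlybr} with $R=R_T$ (legitimate for $T$ large, since then $s\ge\varepsilon T\ge R_T\ge\ell$ and $\min\{t-s,s,R_T\}=\min\{t-s,R_T\}$) bounds it by $C\min\{1,(|t-s|\wedge R_T)^{-1/2+\varepsilon'}\}$. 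For the competition term, a many-to-one computation reduces $\mathbb E[\sum_w p_wq_w]$ to $e^{R_T}\,\mathbb E[p(B_{R_T})\,q(B_{R_T})]$, where $B$ is a standard Brownian motion and $p(y)=\mathbb P(M_{s-R_T}\ge m_s+x-y)$, $q(y)=\mathbb P(M_{t-R_T}\ge m_t+x-y)$, with $M_\tau$ the maximum of an independent BBM at time $\tau$; the relevant argument is $\zeta=\zeta(y)\approx\sqrt2\,R_T+x-y$, and after changing variables from $y$ to $\zeta$ the density of $B_{R_T}$ becomes $c\,R_T^{-1/2}e^{-R_T}e^{\sqrt2\zeta}(1+o(1))$, cancelling the factor $e^{R_T}$; the classical uniform bound $\mathbb P(M_\tau-m_\tau\ge\zeta)\le C(1\vee\zeta)e^{-\sqrt2\zeta}$ then makes $p(\zeta)q(\zeta)e^{\sqrt2\zeta}$ integrable with an $O(1)$ integral, so $\mathbb E[\sum_w p_wq_w]=O(R_T^{-1/2})$ uniformly in $s,t\in[\varepsilon T,T]$. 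Feeding $|\mathbb E[G_sG_t]|\le C\min\{1,(|t-s|\wedge R_T)^{-1/2+\varepsilon'}\}+CR_T^{-1/2}$ into $\mathbb E[H_T^2]=T^{-2}\int\int\mathbb E[G_sG_t]$, the first piece integrates to $O(R_T^{1/2+\varepsilon'}/T)+O(R_T^{-1/2+\varepsilon'})$ and the second to $O(R_T^{-1/2})$, so $\mathbb E[H_T^2]\le C\log(T)^{(2+\delta)(-1/2+\varepsilon')}$; picking $\varepsilon'$ with $(2+\delta)\varepsilon'<\delta/2$ makes the exponent strictly below $-1$, which is the required $\mathbb E[H_T^2]\le C\log(T)^{-1-\gamma}$.

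I expect the competition term $\mathbb E[\sum_w p_wq_w]$ to be the main obstacle. The point is that the early-branching heuristic is misleading: even on the event that no extremal pair observed at $s$ and $t$ has branched after time $R_T$, the conditional covariance does not vanish, because distinct subtrees compete to supply the maximum at the two times. Controlling this residual correlation requires the separate first-moment estimate above, whose success rests on the precise $e^{\sqrt2\zeta}$ shape of the Gaussian weight after the change of variables, combined with the sharp right tail of $M_\tau-m_\tau$; without this the bound would be off by a factor growing like a power of $R_T$. A secondary, purely technical difficulty is the interpolation from $T_n$ to all $T$, where the dependence of the conditioning $\sigma$-algebra $\mathcal F_{R_T}$ on $T$ must be absorbed; this is handled by the Doob-inequality step, which recycles the very same second-moment bound.
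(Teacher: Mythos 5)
Your proposal is correct and follows the same high-level strategy as the paper (second-moment bound, Borel--Cantelli along $T_n=e^{n^\beta}$, then interpolation), but the two central steps are carried out quite differently.

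\emph{The decorrelation step.} The paper's key tool is the van den Berg--Kesten--Reimer inequality (Lemma~\ref{lem:BK-cross}): conditional on $\mathcal F_R$, the descendant trees of distinct $w\in\mathcal N_R$ are independent, and BKR turns the cross-index union over ancestors into a product of conditional marginals. The upshot (Proposition~\ref{thm:decorrelation}) is the one-sided inequality
\[
\mathbb P(M_s\ge m_s+x,\,M_t\ge m_t+x,\,Q(u_s^*,u_t^*)\le R\mid\mathcal F_R)
\le\mathbb P(M_s\ge m_s+x\mid\mathcal F_R)\,\mathbb P(M_t\ge m_t+x\mid\mathcal F_R),
\]
so that $\mathbb E[Y_sY_t\mid\mathcal F_R]\le\mathbb P(\exists(u,v)\in\mathcal N_s(x)\times\mathcal N_t(x):Q(u,v)\ge R\mid\mathcal F_R)$, and Theorem~\ref{thm:earlybr} finishes. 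In other words, BKR shows at one stroke that, on the event of early splitting, the conditional covariance is \emph{non-positive}; only the early-branching contribution survives, and since $\mathbb E[Z_T^2]\ge0$ no two-sided bound is needed. Your argument instead computes the conditional covariance explicitly via the subtree decomposition and telescoping, $|\operatorname{Cov}(\cdot\mid\mathcal F_R)|\le\sum_w r_w+\sum_w p_wq_w$, and then treats the two pieces separately: the early-branching piece is identified (correctly) with the event $\{\exists(u,v):Q(u,v)\ge R\}$ and handled by Theorem~\ref{thm:earlybr}, while the ``competition'' piece $\sum_w p_wq_w$ requires a further many-to-one Gaussian computation to see that it is $O(R_T^{-1/2})$. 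Your competition estimate is sound --- after the change of variable $y\mapsto\zeta\approx\sqrt2R_T+x-y$ the Gaussian weight contributes $R_T^{-1/2}e^{-R_T}e^{\sqrt2\zeta}$, the $e^{R_T}$ from many-to-one cancels, and Bramson's tail $\mathbb P(M_\tau-m_\tau\ge\zeta)\le C(1+\zeta)^2e^{-\sqrt2\zeta}$ makes the $\zeta$-integral $O(1)$ uniformly in $s,t\in[\varepsilon T,T]$ --- but it is extra work that BKR makes unnecessary. Both routes give the same polylogarithmic second-moment bound, and your final parameter choices ($(2+\delta)\varepsilon'<\delta/2$, $\beta$ close to $1$) are consistent with the paper's.

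\emph{The interpolation step.} Here you are actually \emph{more} careful than the paper. The paper passes from $T_n$ to all $T$ via Lemma~\ref{subtoful}, a deterministic statement about bounded time averages $T^{-1}\int_0^T x(t)\,dt$; but $Z_T=T^{-1}\int_{\varepsilon T}^T\bigl(\mathbf 1_{A_t}-\mathbb P(A_t\mid\mathcal F_{R_T})\bigr)\,dt$ is not of that form, because the integrand depends on $T$ through the conditioning $\sigma$-algebra $\mathcal F_{R_T}$. Your split $H_T=\bar H_T-\mathbb E[\bar H_T\mid\mathcal F_{R_T}]$ isolates exactly this: the $\bar H_T$ part has the deterministic $O(1-T_n/T_{n+1})$ modulus and falls under a Lemma~\ref{subtoful}-type argument, while the filtration dependence is absorbed by Doob's $L^2$ maximal inequality applied to $r\mapsto\mathbb E[\bar H_{T_n}\mid\mathcal F_r]$ on $r\in[R_{T_n},R_{T_{n+1}}]$, with $\mathbb E\bigl[\sup_r(M_r-M_{R_{T_n}})^2\bigr]\le4\,\mathbb E[H_{T_n}^2]$ summable. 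This closes a gap that Lemma~\ref{subtoful} alone does not address.

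In short: same skeleton, but you replace BKR by an explicit covariance decomposition plus a first-moment competition estimate (more work, same conclusion), and you replace a simple subsequence lemma by a Doob argument (more work, but actually required). Nothing in your proposal is wrong.
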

Note that a similar statement appears as Theorem~3 in~\cite{arguin2012ergodic}. However, as we explain in Section~\ref{sec:discussion}, there is a gap in their proof, which is resolved by the alternative argument we provide here. The key step is the following result.
\begin{proposition}\label{thm:decorrelation}
Let $0\leq R\leq s \leq t$, let $x\in \mathbb{R}$ and $y\in \mathbb{R}$. Let $u_s^*$ (resp.~$u_t^*$) denote the rightmost particle at time $s$ (resp.~$t$). Then 
        \begin{equation}\label{eq:deco}
    \begin{aligned}
        \mathbb{P}\!\left(M_s \geq x, \,M_t \geq y,\, Q(u^{*}_s,u^{*}_t)\leq R \,\middle| \mathcal{F}_{R}\right)\leq \mathbb{P}\!\left(M_s \geq x \,\middle| \mathcal{F}_{R}\right)\mathbb{P}\!\left(M_t \geq y \,\middle| \mathcal{F}_{R}\right).
    \end{aligned}
    \end{equation}
\end{proposition}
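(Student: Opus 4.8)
The plan is to condition on $\mathcal{F}_R$ and use the branching property to decompose the population at time $R$ into independent subtrees, then exploit a negative-association (FKG-type) argument for maxima over disjoint families of independent particles. Work under the conditional law $\mathbb{P}(\cdot \mid \mathcal{F}_R)$, which we may treat as a fixed initial configuration $(y_1,\dots,y_k) = (X_w(R))_{w \in \mathcal{N}_R}$ together with $k$ independent BBMs launched from these positions. For $w \in \mathcal{N}_R$, let $M_s^{(w)}$ (resp.\ $M_t^{(w)}$) be the maximum at time $s$ (resp.\ $t$) among descendants of $w$, shifted by $y_w$; then $M_s = \max_w M_s^{(w)}$ and $M_t = \max_w M_t^{(w)}$, and the vectors $(M_s^{(w)}, M_t^{(w)})_{w \in \mathcal{N}_R}$ are independent across $w$.

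The key observation is that the event $\{Q(u_s^*, u_t^*) \le R\}$ means the rightmost particle at time $s$ and the rightmost particle at time $t$ have already separated by time $R$, i.e.\ they descend from \emph{different} particles of $\mathcal{N}_R$. Hence, on that event, $M_s$ is achieved by some subtree $w$ and $M_t$ by a subtree $w' \ne w$. I would therefore bound
\[
\mathbb{P}\!\left(M_s \ge x,\, M_t \ge y,\, Q(u_s^*,u_t^*)\le R \,\middle|\, \mathcal{F}_R\right)
\le \sum_{w \ne w'} \mathbb{P}\!\left(M_s^{(w)} \ge x,\ M_t^{(w')}\ge y \,\middle|\, \mathcal{F}_R\right),
\]
but this naive union bound over-counts and does not obviously close; instead the cleaner route is to write the event as $\{\max_w M_s^{(w)} \ge x\} \cap \{\max_{w'} M_t^{(w')} \ge y\}$ intersected with the constraint that the two argmaxes differ, and to compare with the unconstrained product. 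Concretely, let $p_w := \mathbb{P}(M_s^{(w)} \ge x \mid \mathcal{F}_R)$ and $q_w := \mathbb{P}(M_t^{(w)} \ge y \mid \mathcal{F}_R)$. Using independence across subtrees and inclusion–exclusion, one shows
\[
\mathbb{P}\!\left(M_s \ge x,\ M_t \ge y,\ \text{argmaxes differ} \,\middle|\, \mathcal{F}_R\right)
\le \Big(1 - \prod_w (1-p_w)\Big)\Big(1 - \prod_w (1-q_w)\Big)
= \mathbb{P}(M_s \ge x \mid \mathcal{F}_R)\,\mathbb{P}(M_t \ge y \mid \mathcal{F}_R),
\]
where the inequality is the statement that requiring the two witnessing subtrees to be distinct can only lower the probability relative to the product of the two marginal "at least one subtree succeeds" events. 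This last inequality is a purely combinatorial fact about independent Bernoulli-type families: if $A_w = \{M_s^{(w)} \ge x\}$ and $B_w = \{M_t^{(w)}\ge y\}$ with $(A_w, B_w)$ independent across $w$, then $\mathbb{P}(\bigcup_w A_w \cap \bigcup_{w'} B_{w'} \cap \{\exists\, w\ne w'\})\le \mathbb{P}(\bigcup_w A_w)\mathbb{P}(\bigcup_w B_w)$; I would prove it by expanding both sides over the (independent) subtree outcomes.

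The main obstacle is getting this combinatorial inequality exactly right, because the event "$M_s$ and $M_t$ are achieved in different subtrees" is not simply $\big(\bigcup_w A_w\big)\cap\big(\bigcup_{w'}B_{w'}\big)$ minus the diagonal in a way that factorizes — one has to be careful that choosing the argmax subtree for time $s$ and then asking a \emph{different} subtree to carry the time-$t$ maximum correlates the two selections. The clean way around this is to drop the argmax interpretation and only use the weaker (and sufficient) set-theoretic inclusion: $\{M_s\ge x,\ M_t\ge y,\ Q(u^*_s,u^*_t)\le R\}\subseteq \bigcup_{w\ne w'}(A_w\cap B_{w'})$, and then bound $\mathbb{P}\big(\bigcup_{w\ne w'}A_w\cap B_{w'}\mid\mathcal F_R\big)$ directly. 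Here I would use independence across $w$ to compute, via inclusion–exclusion in the "no success" complement, that
\[
\mathbb{P}\Big(\bigcup_{w\ne w'} A_w\cap B_{w'}\,\Big|\,\mathcal F_R\Big)
\;\le\; \Big(1-\prod_w(1-p_w)\Big)\Big(1-\prod_w(1-q_w)\Big),
\]
the point being that the right-hand side already accounts for \emph{all} pairs $(w,w')$ including $w=w'$, so removing the diagonal only helps. This reduces everything to the elementary observation that for independent events $\mathbb{P}(\bigcup A_w \cap \bigcup B_{w'}) \ge \mathbb{P}(\bigcup_{w\ne w'} A_w \cap B_{w'})$, after which the product formula for the two unions over independent families finishes the proof. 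The remaining routine check is measurability: that $p_w, q_w$ are $\mathcal{F}_R$-measurable and that the conditional independence of the subtrees given $\mathcal{F}_R$ is exactly the branching property, both standard.
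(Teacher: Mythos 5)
Your decomposition into the subtrees rooted at $\mathcal N_R$, the identification of $\{Q(u_s^*,u_t^*)\le R\}$ with "the two argmaxes come from distinct ancestors at time $R$", and the inclusion $\{M_s\ge x, M_t\ge y, Q(u_s^*,u_t^*)\le R\}\subseteq\bigcup_{w\ne w'}A_w\cap B_{w'}$ all match the paper exactly. The gap is in the final step, where you pass from $\mathbb{P}\bigl(\bigcup_{w\ne w'}A_w\cap B_{w'}\bigr)\le\mathbb{P}\bigl(\bigcup_w A_w\cap\bigcup_{w'}B_{w'}\bigr)$ to the product $\mathbb{P}\bigl(\bigcup_w A_w\bigr)\mathbb{P}\bigl(\bigcup_{w'}B_{w'}\bigr)$ by invoking "the product formula for the two unions over independent families." The two unions are \emph{not} conditionally independent given $\mathcal F_R$: the events $A_w=\{M_s^{(w)}\ge x\}$ and $B_w=\{M_t^{(w)}\ge y\}$ for the \emph{same} $w$ are both determined by the subtree rooted at $w$, and are typically positively correlated. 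For a single subtree ($|\mathcal N_R|=1$, $A_1=B_1$ with $\mathbb{P}(A_1)=1/2$) the "product formula" already fails: $\mathbb{P}(\bigcup A_w\cap\bigcup B_w)=1/2>1/4$. So dropping the off-diagonal restriction and appealing to independence of the unions cannot close the argument.

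What is actually needed is the fact that removing the diagonal $w=w'$ cancels precisely the within-subtree dependence, and this is what the van den Berg--Kesten--Reimer inequality furnishes. The paper isolates this as Lemma \ref{lem:BK-cross}: if $w_0\ne w'_0$ witness $A_{w_0}\cap B_{w'_0}$, the coordinates $\{w_0\}$ and $\{w'_0\}$ are disjoint witnessing sets, so $\bigcup_{w\ne w'}A_w\cap B_{w'}\subseteq\bigl(\bigcup_w A_w\bigr)\circ\bigl(\bigcup_w B_w\bigr)$ (disjoint occurrence), and BKR then gives the product bound. You could alternatively attempt the "expand over independent subtree outcomes" computation you mention in passing, but it is not a routine inclusion--exclusion: the complement event is that either $S_A:=\{w:A_w\text{ occurs}\}$ or $S_B:=\{w:B_w\text{ occurs}\}$ is empty, or both are equal to the same singleton, and turning that into the stated inequality requires controlling the within-$w$ joint probabilities $\mathbb{P}(A_w\cap B_w)$ against $\mathbb{P}(A_w)\mathbb{P}(B_w)$, which is exactly the BKR/disjoint-occurrence content. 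Without that ingredient the proposal does not establish \eqref{eq:deco}.
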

Before proving Proposition~\ref{thm:decorrelation}, we state the following corollary of the van den Berg--Kesten--Reimer inequality. Its proof is deferred to Section~\ref{Sec:proofprop}.
\begin{lemma}[BKR corollary for cross--index unions]\label{lem:BK-cross}  
Let \(U\) be an index set. 
For each \(u \in U\), let \((A_u,E_u)\) be a pair of events.
Assume that the family of sigma-algebras \(\{\sigma(A_u,E_u) : u \in U\}\) is mutually independent. 
Then 
\[
\mathbb{P}\!\left(\bigcup_{\substack{u,v \in U \\ u \neq v}} A_u \cap E_v \right)
\;\leq\;
\mathbb{P}\!\left(\bigcup_{u \in U} A_u \right)\,
\mathbb{P}\!\left(\bigcup_{u \in U} E_u \right).
\]
\end{lemma}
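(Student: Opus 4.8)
The plan is to prove Lemma~\ref{lem:BK-cross} by reduction to the van den Berg--Kesten--Reimer (BKR) inequality applied on a suitable product probability space. Since the sigma-algebras $\sigma(A_u,E_u)$, $u\in U$, are mutually independent, the joint law of the family $\{(\bbone_{A_u},\bbone_{E_u})\}_{u\in U}$ is a product measure, and we may realise it on $\Omega=\prod_{u\in U}\Omega_u$ where each $\Omega_u$ carries the pair $(A_u,E_u)$. Concretely, I would introduce $2|U|$ coordinates, one pair $(\omega_u^A,\omega_u^E)$ per index, or more simply keep one coordinate $\omega_u$ per index recording which of the four cells $A_u\cap E_u$, $A_u\setminus E_u$, etc.\ occurs; the point is that each event $A_u$ and each event $E_v$ is measurable with respect to a single, and for $u\ne v$ a \emph{distinct}, coordinate.

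Next I would identify $\bigcup_{u\ne v}A_u\cap E_v$ as a \emph{disjoint occurrence} event in the sense of BKR. Set $A:=\bigcup_{u\in U}A_u$ and $E:=\bigcup_{u\in U}E_u$; I claim $\bigcup_{u\ne v}A_u\cap E_v\subseteq A\square E$, the event that $A$ and $E$ occur on disjoint sets of coordinates. Indeed, on the event $A_u\cap E_v$ with $u\ne v$, the coordinate $\omega_u$ alone witnesses $A$ (so $A$ occurs on the witness set $\{u\}$) and the coordinate $\omega_v$ alone witnesses $E$ (so $E$ occurs on $\{v\}$), and these witness sets $\{u\}$ and $\{v\}$ are disjoint. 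Hence the union over $u\ne v$ is contained in $A\square E$. The BKR inequality then gives
\[
\mathbb{P}\!\left(\bigcup_{u\ne v}A_u\cap E_v\right)\le \mathbb{P}(A\square E)\le \mathbb{P}(A)\,\mathbb{P}(E)=\mathbb{P}\!\left(\bigcup_{u\in U}A_u\right)\mathbb{P}\!\left(\bigcup_{u\in U}E_u\right),
\]
which is the claim.

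Two technical points deserve care. First, BKR in its standard form is stated for finitely many coordinates (or a finite product of finite probability spaces); to handle a possibly infinite index set $U$ I would first prove the inequality for finite $U$ and then pass to the limit by monotone/continuity of measure, writing $U$ as an increasing union of finite subsets and noting that both the left-hand union and the two right-hand unions increase to their limits. (If in the application $U$ is already finite — which it is, since $\mathcal N_s$, $\mathcal N_t$ are a.s.\ finite — this step can be skipped, but stating it for general $U$ costs nothing.) Second, one must make sure the chosen coordinate space genuinely carries each $(A_u,E_u)$ with the correct marginal law and that the product is the correct joint law: this is exactly where mutual independence of the $\sigma(A_u,E_u)$ is used, via the standard fact that a family of mutually independent sub-sigma-algebras can be represented on a product space.

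The main obstacle, such as it is, is bookkeeping rather than substance: one has to set up the coordinate/product representation cleanly so that each $A_u$ and $E_v$ depends on exactly one coordinate, and then verify that ``$A_u\cap E_v$ with $u\ne v$'' really does force $A$ and $E$ onto disjoint witness sets — the inequality is false without the restriction $u\ne v$, so the disjointness of the singletons $\{u\},\{v\}$ is the crux. Once the disjoint-occurrence containment is in place, BKR does the rest with no computation.
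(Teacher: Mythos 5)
Your proof is correct and takes essentially the same route as the paper's: realise the independence via a product-space coordinate structure, observe that on $A_u\cap E_v$ with $u\neq v$ the singleton witness sets $\{u\}$ and $\{v\}$ are disjoint so that $\bigcup_{u\neq v}(A_u\cap E_v)\subseteq A\,\square\,E$, and then apply BKR. The paper's version is slightly less explicit about the product representation and does not discuss the infinite-$U$ case, which you handle by exhausting $U$ with finite subsets; these are harmless refinements, not a different argument.
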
  

\begin{proof}[Proof of Proposition \ref{thm:decorrelation}]
For $u \in \mathcal N_R$, define
\[
M^u_s \coloneqq \max_{v \in \mathcal N_s : u \preceq v} X_v(s)
\quad \text{(resp. } M^u_t\text{)}
\]
as the rightmost particle at time $s$ (resp. $t$) in the subtree rooted at $u$, and set
\[
A_u := \{ M^u_s \ge x \}, \qquad
E_u := \{ M^u_t  \ge y \}.
\]

On $\{Q(u_s^*,u_t^*)\le R\}$ the rightmost particles at times $s$ and $t$ have distinct ancestors at time $R$; hence
\[
\{M_s\geq x,\ M_t\geq y,\ Q(u_s^*,u_t^*)\le R\}
\subseteq \bigcup_{\substack{u,v\in\mathcal N_R\\ u\neq v}} \bigl(A_u\cap E_v\bigr)
\quad\text{a.s.}
\]

Condition on $\mathcal F_R$. By the branching property, the descendant trees below distinct $u\in\mathcal N_R$ are independent. Hence the family $\{(A_u,E_u)\}_{u\in\mathcal N_R}$ satisfies the hypothesis of Lemma~\ref{lem:BK-cross}. Applying that lemma yields
\[
\mathbb P\!\left(\bigcup_{\substack{u,v\in\mathcal N_R\\ u\neq v}} (A_u\cap E_v)\,\middle|\,\mathcal F_R\right)
\le
\mathbb P\!\left(\bigcup_{u\in\mathcal N_R}A_u\,\middle|\,\mathcal F_R\right)
\mathbb P\!\left(\bigcup_{v\in\mathcal N_R}E_v\,\middle|\,\mathcal F_R\right).
\]
Finally,
\[
\bigcup_{u\in\mathcal N_R}A_u=\{M_s\geq x\},\qquad
\bigcup_{v\in\mathcal N_R}E_v=\{M_t\geq y\},
\]
so the right-hand side equals
$\mathbb P(M_s\geq x\,|\,\mathcal F_R)\,
 \mathbb P(M_t\geq y\,|\,\mathcal F_R)$,
which establishes \eqref{eq:deco}, as claimed.
\end{proof}

We now prove Theorem \ref{thm:fluctuations}.
\begin{proof}[Proof of Theorem \ref{thm:fluctuations}]
Let $R:=R_T=(\log T)^{2+\delta}$ for some $\delta>0$. For $t\ge0$ set
\[
A_t:=\{M_t-m_t\geq x\},\qquad
Y_t:=\bbone_{A_t}-\mathbb{P}(A_t\mid\mathcal F_R),\qquad
\Delta_T:=\frac{1}{T}\int_{\varepsilon T}^T Y_t\,dt .
\]
The statement of Theorem \ref{thm:fluctuations} is that $\Delta_T\to0$ almost surely as $T\to \infty$.

Square and symmetrize, then split to yield
\[
\Delta_T^2=\frac{2}{T^2}\int_{\varepsilon T}^T\!\!\int_s^T Y_sY_t\,dt\,ds
=\underbrace{\frac{2}{T^2}\int_{\varepsilon T}^T\!\!\int_s^{(s+R)\wedge T} Y_sY_t\,dt\,ds}_{=:I_T}
+\underbrace{\frac{2}{T^2}\int_{\varepsilon T}^T\!\!\int_{(s+R)\wedge T}^{T} Y_sY_t\,dt\,ds}_{=:J_T}.
\]
Since $|Y_sY_t|\le 1$, $I_T \le 2R/T$, thus $\E[ I_T]\leq 2R/T$. We now bound $\E[J_T]$. For $R\le s\le t$, applying Proposition~\ref{thm:decorrelation} on $\{Q(u_s^*,u_t^*)\le R\}$,
\begin{equation*}
    \begin{split}
      \mathbb{E}[Y_sY_t\mid\mathcal F_R]
&\le \mathbb{P}\!\big(A_s\cap A_t\cap\{Q(u_s^*,u_t^*)\ge R\}\mid\mathcal F_R\big)\\
&\le  \P\!\left(\exists (u,v)\in \mathcal N_s(x)\times \mathcal N_t(x):\ Q(u,v)\ge R \mid\mathcal F_R \right).  
    \end{split}
\end{equation*}
Therefore, for any $\alpha \in (0,1/2)$, Theorem~\ref{thm:earlybr} {(applied with $\rho=0$)} gives, for sufficiently large $T$ (and therefore sufficiently large $R$),
\[
\mathbb{E}[J_T]
\le \frac{2}{T^2}\int_{\varepsilon T}^T\!\!\int_{s+R}^{T} \mathbb{E}[Y_sY_t]\,dt\,ds \le \frac{2}{T^2}\int_{\varepsilon T}^T\!\!\int_{s+R}^{T} R^{-\alpha}\,dt\,ds
\le R^{-\alpha}.
\]
Let $T_n=e^{n^\beta}$ with $\beta\in(0,1)$. Let $\eta>0$, by Markov’s inequality,
\[
\sum_n \mathbb{P}\big(\Delta_{T_n}^2>\eta\big) \le\eta^{-1}\sum_n \mathbb{E}[I_{T_n}]+\mathbb{E}[J_{T_n}] \le \eta^{-1}\sum_{n=1}^\infty \frac{2(\log T_n)^{2+\delta}}{T_n}
+(\log T_n)^{-(2+\delta)\alpha} <\ \infty,
\]
whenever
\[
\beta(2+\delta)\alpha>1,
\]
which is achievable for every $\delta>0$ by taking $\beta \in \big(\tfrac{2}{2+\delta}, 1\big)$ and $\alpha$ arbitrarily close to $\tfrac{1}{2}$. 
Hence, $\Delta_{T_n} \to 0$ almost surely. 

Combined with~\eqref{eq:cond-mean-conv}, this yields convergence of the time average of $\bbone_{A_t}$ along the subsequence. By Lemma~\ref{subtoful} below, applied to the fixed integrand $\bbone_{A_t}$, convergence extends to all $T$, and therefore $\Delta_T\to 0$ almost surely. The proof of Lemma~\ref{subtoful} is deferred to Section~\ref{Sec:proofprop}.

\begin{lemma}[Subsequence-to-full-time convergence for time averages]\label{subtoful}
Let $x: [0, \infty) \to \mathbb{R}$ be a measurable function bounded from below by $M \in \mathbb{R}$. 
For $T>0$, define the time averages
\[
\rho_T := \frac{1}{T}\int_0^T x(t)\,dt .
\]
Let $(S_n)_{n\ge 1}$ be an increasing sequence with $S_n\to\infty$ and $S_{n+1}/S_n\to 1$. 
If
\[
\rho_{S_n} \xrightarrow[n\to\infty]{} \rho,
\]
then
\[
\rho_T \xrightarrow[T\to\infty]{} \rho .
\]
\end{lemma}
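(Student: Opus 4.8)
The plan is to reduce the full-time convergence to the subsequence convergence by locating, for each large $T$, the unique index $n=n(T)$ with $S_n\le T<S_{n+1}$. Such an index exists once $T\ge S_1$ because $(S_n)$ is increasing and unbounded, and clearly $n(T)\to\infty$ as $T\to\infty$. The starting point is the elementary decomposition obtained by splitting the integral defining $\rho_T$ at $S_n$: since $T\rho_T=\int_0^{S_n}x(t)\,dt+\int_{S_n}^T x(t)\,dt=S_n\rho_{S_n}+\int_{S_n}^T x(t)\,dt$, we get
\[
\rho_T=\frac{S_n}{T}\,\rho_{S_n}+\frac{1}{T}\int_{S_n}^T x(t)\,dt .
\]

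Next I would bound the two terms separately. For the first term, $0<S_n/T\le 1$, so $\bigl|\tfrac{S_n}{T}\rho_{S_n}\bigr|\le|\rho_{S_n}|$, which tends to $0$ as $T\to\infty$ because then $n(T)\to\infty$ and $\rho_{S_n}\to 0$ by hypothesis. For the second (remainder) term, using $|x|\le 1$ together with $S_n\le T< S_{n+1}$,
\[
\left|\frac{1}{T}\int_{S_n}^T x(t)\,dt\right|\le\frac{T-S_n}{T}=1-\frac{S_n}{T}\le 1-\frac{S_n}{S_{n+1}}\xrightarrow[n\to\infty]{}0,
\]
where the last limit is precisely the gap hypothesis $S_{n+1}/S_n\to 1$. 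Adding the two estimates and letting $T\to\infty$ yields $\rho_T\to 0$, as claimed.

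There is essentially no obstacle here: the argument is a short $\varepsilon$-free estimate. The only point deserving a moment's attention is that the remainder window $[S_n,T]$ has length at most $S_{n+1}-S_n=o(S_n)$, so that after dividing by $T\ge S_n$ its contribution is negligible uniformly over $T\in[S_n,S_{n+1})$; this is exactly what the asymptotically negligible gaps assumption buys us, and it is the step that would fail without it.
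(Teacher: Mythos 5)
Your proof is correct and follows the paper's argument essentially verbatim: the same decomposition of $\rho_T$ at $S_n$, the same two bounds using $|x|\le 1$ and $S_n\le T<S_{n+1}$, and the same use of the gap hypothesis. The only cosmetic difference is that you index by $n(T)$ and let $T\to\infty$, whereas the paper takes a supremum over $T\in[S_n,S_{n+1}]$ and lets $n\to\infty$; these are equivalent.
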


This concludes the proof of Theorem~\ref{thm:fluctuations}.
\end{proof}

\section{Proof of Theorem~\ref{thm:functional_ergodic}}\label{sec:functional-proof}

The proof of Theorem~\ref{thm:functional_ergodic} relies on the following conditional approximation, which extends Theorem~2 of~\cite{arguin2012ergodic} to growing thresholds.

\begin{proposition}[Growing-threshold conditional approximation]%
\label{prop:growing-threshold}
Fix $\delta\in(0,\tfrac{1}{4})$ and let
$R_T\to\infty$ with $R_T=o(T^{(1-\delta)/2})$.
Let $D=D(T)$ satisfy
\[
-c\,\log R_T
\;\le\; D
\;\le\; \frac{\sqrt{R_T}}{(\log R_T)^{2}}
\]
for some fixed $c<1/(2\sqrt{2})$.
Then, uniformly over $s\in[T^{1-\delta},\,T]$,
\begin{equation}\label{eq:growing}
\P\!\left(M_s-m_s\le D
  \,\middle|\,\mathcal{F}_{R_T}\right)
= \exp\!\bigl(-(1+o(1))\,C_\star\,Z_\infty\,e^{-\sqrt{2}\,D}\bigr),
\qquad\text{a.s.,}
\end{equation}
where $o(1)\to 0$ a.s.\ as $T\to\infty$.
\end{proposition}

\begin{proof}[Proof of Proposition~\ref{prop:growing-threshold}]
The argument follows~\cite[Theorem~2]{arguin2012ergodic}.
For each $u\in\mathcal{N}_{R_T}$, set
\[
y_u:=\sqrt{2}\,R_T-X_u(R_T),
\qquad
z_u:=y_u\,e^{-\sqrt{2}\,y_u},
\]
and write $Y:=Y(R_T)=\sum_u e^{-\sqrt{2}\,y_u}$ and
$Z:=Z_{R_T}=\sum_u z_u$.
Since $s\ge T^{1-\delta}$ and $R_T=o(T^{(1-\delta)/2})$,
one has $R_T=o(\sqrt{s}\,)$.

By the branching property at time~$R_T$
(cf.~\cite[(3.1)--(3.3)]{arguin2012ergodic}),
\begin{equation}\label{eq:prod}
\P\!\left(M_s-m_s\le D
  \,\middle|\,\mathcal{F}_{R_T}\right)
= \prod_{u\in\mathcal{N}_{R_T}}(1-a_u),
\end{equation}
where
\begin{gather*}
a_u:=\P\bigl(M_{s-R_T}-m_{s-R_T}>\xi_u+\epsilon_{s,T}\bigr),
\qquad
\xi_u:=D+y_u,\\
\epsilon_{s,T}:=\tfrac{3}{2\sqrt{2}}\,
   \log\!\Bigl(\frac{s-R_T}{s}\Bigr)
   \xrightarrow[T\to\infty]{} 0
   \quad\text{uniformly in $s$.}
\end{gather*}

We verify the hypotheses of the uniform right-tail
estimate~\cite[Corollary~1.2]{Chataignier2025}
with $t=s-R_T$.
By~\cite[Theorem~1.1]{Hu2012TheAS},
\begin{equation}\label{eq:miny}
\min_u y_u
\;\ge\; \frac{1}{2\sqrt{2}}\,\log R_T\,(1+o(1))
\qquad\text{a.s.\ as } R_T\to\infty.
\end{equation}
Since $c<1/(2\sqrt{2})$ and $D\ge -c\log R_T$,
it follows that
$\min_u \xi_u = D + \min_u y_u \to +\infty$
a.s.\ as $T\to\infty$.
Moreover,
\[
\max_u \xi_u
\;\le\; |D|+\max_u y_u
\;=\; O(R_T)
\;=\; o(\sqrt{s}\,),
\]
since $\max_u y_u=O(R_T)$ a.s.\
by~\cite{Bramson1983ConvergenceOS} and $|D|=o(R_T)$.
Therefore~\cite[Corollary~1.2]{Chataignier2025} gives,
a.s.\ and uniformly in~$u$,
\[
a_u=C_\star(1+o_T(1))\,\xi_u\,e^{-\sqrt{2}\,\xi_u},
\]
where $o_T(1)\to 0$ a.s.\ as $T\to\infty$.
Consequently,
\begin{equation}\label{eq:sumak}
\sum_u a_u
= C_\star(1{+}o_T(1))\,e^{-\sqrt{2}\,D}
  \sum_u \xi_u\,e^{-\sqrt{2}\,y_u}
= C_\star(1{+}o_T(1))\,e^{-\sqrt{2}\,D}\,(Z+D\,Y).
\end{equation}

By~\cite[Theorem~1.2]{Hu2012TheAS},
$Y\le(\log R_T)^{1+\varepsilon}/\!\sqrt{R_T}$
a.s.\ for all large~$T$ and every $\varepsilon>0$.
Since $|D|\le\sqrt{R_T}/(\log R_T)^2$ for all large~$T$,
\[
|D|\,Y
\;\le\; (\log R_T)^{\varepsilon-1}
\;\xrightarrow[T\to\infty]{}\;0
\qquad\text{a.s.,\;for }\varepsilon\in(0,1).
\]
Since $Z\xrightarrow[T\to\infty]{\text{a.s.}} Z_\infty>0$,
we conclude that $Z+D\,Y\to Z_\infty$ a.s.\ and hence
\begin{equation}\label{eq:mainterm}
\sum_u a_u = C_\star(1{+}o_T(1))\,Z_\infty\,e^{-\sqrt{2}\,D}
\qquad\text{a.s.}
\end{equation}

Since $\min_u \xi_u\to+\infty$ a.s., we have $\max_u a_u\to0$
a.s.\ as $T\to\infty$.
Using $-a-a^2\le\log(1-a)\le -a$ for $a\in(0,\tfrac{1}{2})$
and $\sum_u a_u^2\le\bigl(\max_u a_u\bigr)\bigl(\sum_u a_u\bigr)
=o(e^{-\sqrt{2}\,D})$ a.s.,
\[
\frac{\sum_u\log(1-a_u)}{e^{-\sqrt{2}\,D}}
\;\xrightarrow[T\to\infty]{\text{a.s.}}\;
-C_\star\,Z_\infty,
\]
which yields~\eqref{eq:growing}.
\end{proof}

\begin{proof}[Proof of Theorem~\ref{thm:functional_ergodic}]
Write $\hat{M}_t:=M_t-m_t$.
Fix $K>0$ with $\pm K\notin D_f$ and decompose
\[
f=f^{(K)}+f_K^++f_K^-,
\]
where
$f^{(K)}:=f\,\bbone_{[-K,K]}$,\;
$f_K^+:=f\,\bbone_{(K,\infty)}$,\;
$f_K^-:=f\,\bbone_{(-\infty,-K)}$.
Fix $\delta\in(0,\tfrac{1}{4})$, $\vartheta\in(1-\delta,1)$,
and $b>\max\{1,\beta\}$.
To control the tails, introduce growing truncation levels
\[
L_t^+:=q_+\log\log t,
\qquad
L_t^-:=q_-\log\log\log t,
\]
where $q_->1$ satisfies
\begin{equation}\label{eq:qminus_choice}
\beta q_-<b,
\end{equation}
and $q_+>0$ is chosen large enough to
satisfy~\eqref{eq:qplus_choice} below.
Set
\[
R_T:=\exp\!\big((\log\log T)^b\big).
\]
Since $L_t^-=o(\log R_T)$ and $L_t^+=o\!\big(\sqrt{R_T}/(\log R_T)^2\big)$ as $T\to\infty$, Proposition~\ref{prop:growing-threshold} applies with any threshold $D\in[-L_t^-,\,L_t^+]$, uniformly over $t\in[T^\vartheta,T]$, for all large~$T$.

\medskip
\noindent\textbf{Truncated part.}\;
Since $f^{(K)}$ is bounded with Lebesgue-null discontinuity set,
it can be uniformly approximated by step functions whose jumps
avoid~$D_f$.
Each step function is a linear combination of half-line
indicators, so Theorem~\ref{thm:ergodic} yields
\begin{equation}\label{eq:bounded_core}
\frac{1}{T}\int_0^T f^{(K)}(\hat{M}_t)\,dt
\;\xrightarrow[T\to\infty]{}\;
\E\!\left[
f^{(K)}\!\left(\frac{\mathcal{G}+\log C_\star+\log Z_\infty}{\sqrt{2}}\right)
\,\middle|\,Z_\infty\right]
\qquad\text{a.s.}
\end{equation}
It remains to show that both tails vanish as $K\to\infty$.

\medskip
\noindent\textbf{Right tail.}\;
We truncate $f_K^+$ at a growing level $L_t^+$ and control separately the conditional mean, the fluctuations, and the remainder beyond the truncation.
The growth bound $|f(x)|\le Ae^{\alpha x}$ for $x\ge0$ gives
\[
|f_K^+(x)|\le A\,\phi_t^+(x)+A\,h_t^+(x),
\]
where
\[
\phi_t^+(x):=e^{\alpha\min(x,\,L_t^+)}\,\bbone_{\{x\ge K\}},
\qquad
h_t^+(x):=e^{\alpha x}\,\bbone_{\{x>L_t^+\}}.
\]

On the interval $[0,T^\vartheta]$, the contribution
from~$\phi_t^+$ satisfies
\begin{equation}\label{eq:right_early}
\frac{1}{T}\int_0^{T^\vartheta}\phi_t^+(\hat{M}_t)\,dt
\;\le\; T^{\vartheta-1}\,e^{\alpha L_{T^\vartheta}^+}
\;\le\; T^{\vartheta-1}\,(\log T)^{\alpha q_+}
\;\xrightarrow[T\to\infty]{}\; 0.
\end{equation}

On the interval $[T^\vartheta,T]$, we bound the conditional mean
and the fluctuation separately.
By Proposition~\ref{prop:growing-threshold}, uniformly for
$t\in[T^\vartheta,T]$ and $y\in[K,L_t^+]$,
\[
\P\!\left(\hat{M}_t\ge y\,\middle|\,\mathcal{F}_{R_T}\right)
\;\le\; (1+o(1))\,C_\star Z_\infty\,e^{-\sqrt{2}\,y},
\]
where $o(1)\to0$ a.s.\ as $T\to\infty$.
The function~$\phi_t^+$ admits the Stieltjes representation
\[
\phi_t^+(x)
=\int_{[K,L_t^+]}\bbone_{\{x\ge y\}}\,\nu_t^+(dy),
\]
where
$\nu_t^+(dy):=e^{\alpha K}\delta_K(dy)
+\alpha e^{\alpha y}\bbone_{[K,L_t^+]}(y)\,dy$
has total mass $\nu_t^+([K,L_t^+])=e^{\alpha L_t^+}$.
Integrating the conditional tail bound against~$\nu_t^+$ yields
\begin{equation}\label{eq:right_cond_mean}
\sup_{t\in[T^\vartheta,T]}
\E\!\left[\phi_t^+(\hat{M}_t)\,\middle|\,\mathcal{F}_{R_T}\right]
\;\lesssim\; C_\star Z_\infty\,
e^{(\alpha-\sqrt{2})K}
\;\xrightarrow[K\to\infty]{}\;0
\qquad\text{a.s.}
\end{equation}

We now control the fluctuation around this conditional mean.
Set
\[
Y_t^{+}:=\phi_t^+(\hat{M}_t)
-\E\!\left[\phi_t^+(\hat{M}_t)\,\middle|\,\mathcal{F}_{R_T}\right],
\qquad
\overline{F}_T^{+,K}:=\frac{1}{T}\int_{T^\vartheta}^T Y_t^{+}\,dt.
\]
Squaring and symmetrising,
\begin{equation}\label{eq:right_var}
\E\!\left[\big(\overline{F}_T^{+,K}\big)^2\right]
\;\le\; \frac{2}{T^2}\int_{T^\vartheta}^T\!\!\int_s^{(s+R_T)\wedge T}
    \E[Y_s^+Y_t^+]\,dt\,ds
  \;+\; \frac{2}{T^2}\int_{T^\vartheta}^T\!\!\int_{(s+R_T)\wedge T}^{T}
    \E[Y_s^+Y_t^+]\,dt\,ds.
\end{equation}
Since $|Y_t^+|\le 2e^{\alpha L_t^+}\le 2(\log T)^{\alpha q_+}$,
the close-time contribution (first integral) is at most
$4R_T(\log T)^{2\alpha q_+}/T$.

For the well-separated contribution ($t-s\ge R_T$),
the Stieltjes representation gives
\[
Y_t^+=\int_{[K,L_t^+]}\xi_t^+(y)\,\nu_t^+(dy),
\qquad
\xi_t^+(y):=\bbone_{\{\hat{M}_t\ge y\}}
-\P\!\left(\hat{M}_t\ge y\,\middle|\,\mathcal{F}_{R_T}\right).
\]
For fixed $y,z\ge K$, the covariance
$\E[\xi_s^+(y)\,\xi_t^+(z)\mid\mathcal{F}_{R_T}]$
splits according to whether the rightmost particles at
times~$s$ and~$t$ share a common ancestor after
time~$R_T$.
By Proposition~\ref{thm:decorrelation}, the contribution
from pairs branching before time~$R_T$ is non-positive, so
\[
\E\!\left[\xi_s^+(y)\,\xi_t^+(z)
\,\middle|\,\mathcal{F}_{R_T}\right]
\;\le\; \P\!\left(\exists(u,v)\in\mathcal{N}_s(y)
  \times\mathcal{N}_t(z):\,
  Q(u,v)\ge R_T
\,\middle|\,\mathcal{F}_{R_T}\right).
\]
Since $y,z\ge K$,
$\mathcal{N}_s(y)\subseteq\mathcal{N}_s(K)$
and $\mathcal{N}_t(z)\subseteq\mathcal{N}_t(K)$, so
the right-hand side is bounded by
$\P\bigl(\exists(u,v)\in\mathcal{N}_s(K)
\times\mathcal{N}_t(K):\,Q(u,v)\ge R_T\,\big|\,\mathcal{F}_{R_T}\bigr)$.
By Theorem~\ref{thm:earlybr} (applied with {$x=K$ and} $\rho=0$),
for every $\varepsilon\in(0,\tfrac{1}{2})$ and all
sufficiently large~$T$,
\[
\E[Y_s^+Y_t^+]
\;\le\; (\log T)^{2\alpha q_+}\,R_T^{-1/2+\varepsilon}.
\]
Substituting into~\eqref{eq:right_var},
\begin{equation}\label{eq:right_var_bound}
\E\!\left[\big(\overline{F}_T^{+,K}\big)^2\right]
\;\le\; \frac{4R_T(\log T)^{2\alpha q_+}}{T}
  \;+\; (\log T)^{2\alpha q_+}\,R_T^{-1/2+\varepsilon}.
\end{equation}
Let $T_n:=\exp(n^\eta)$ with $\eta\in(0,1)$, so that
$T_{n+1}/T_n\to 1$ as $n\to\infty$.
Along this subsequence,
$R_{T_n}=\exp\!\big((\eta\log n)^b\big)$
with $b>1$, so the right-hand side
of~\eqref{eq:right_var_bound} is summable in~$n$.
By Markov's inequality and Borel--Cantelli,
\begin{equation}\label{eq:right_fluct_subseq}
\overline{F}_{T_n}^{+,K}\;\xrightarrow[n\to\infty]{\text{a.s.}}\; 0
\qquad\text{for each fixed~$K$.}
\end{equation}

It remains to handle the extreme right-tail remainder, beyond the
truncation level~$L_t^+$.
Choose $q_+>0$ large enough that
\begin{equation}\label{eq:qplus_choice}
\eta\,q_+\,(\sqrt{2}-\alpha)>1.
\end{equation}
By~\cite[Corollary~4.2]{Chataignier2025},
$\P(\hat{M}_t>x)\le C'(1+x_+)\,e^{-\sqrt{2}\,x}$
for all $t\ge2$ and $x\in\mathbb{R}$, so
\[
\E[h_t^+(\hat{M}_t)]
\;\le\; C''\,(\log\log t)\,(\log t)^{-q_+(\sqrt{2}-\alpha)}.
\]
Therefore
\[
\E\!\left[\frac{1}{T_n}\int_0^{T_n}h_t^+(\hat{M}_t)\,dt\right]
\;\lesssim\;(\log n)\,n^{-\eta q_+(\sqrt{2}-\alpha)},
\]
which is summable in~$n$ by~\eqref{eq:qplus_choice}.
Borel--Cantelli then gives
$\frac{1}{T_n}\int_0^{T_n}h_t^+(\hat{M}_t)\,dt\to0$ a.s.\
as $n\to\infty$.

Combining~\eqref{eq:right_early},~\eqref{eq:right_cond_mean},
\eqref{eq:right_fluct_subseq}, and the extreme right-tail bound, and
extending from the subsequence~$(T_n)$ to the full time axis
via Lemma~\ref{subtoful}, we obtain
\begin{equation}\label{eq:right_tail_done}
\lim_{K\to\infty}\limsup_{T\to\infty}
\frac{1}{T}\int_0^T|f_K^+(\hat{M}_t)|\,dt=0
\qquad\text{a.s.}
\end{equation}

\medskip
\noindent\textbf{Left tail.}\;
The argument for the left tail parallels the right tail,
with two modifications: the double-exponential growth
of~$f$ for large negative arguments requires a different
truncation, and the contribution from the extreme left
vanishes by an almost sure lower bound on~$\hat{M}_t$ rather
than by a moment bound.

Define
\[
\phi_t^-(x)
:=\exp\!\big(e^{\beta\min(-x,\,L_t^-)}\big)\,
  \bbone_{\{x\le-K\}}.
\]
By~\cite[Theorem~1.1]{Hubig},
\begin{equation}\label{eq:Hu_cutoff}
\hat{M}_t\;\ge\; -L_t^-
\qquad\text{for all sufficiently large }t,\;\text{a.s.}
\end{equation}
On the event $\{\hat{M}_t\ge-L_t^-\}$, one has
$\min(-\hat{M}_t,L_t^-)=-\hat{M}_t$, so
$|f_K^-(\hat{M}_t)|\le A\,\phi_t^-(\hat{M}_t)$.
In particular, $|f_K^-(\hat{M}_t)|\le A\,\phi_t^-(\hat{M}_t)$
for all $t\ge t_0(\omega)$, where $t_0$ is almost surely
finite.

On $[0,T^\vartheta]$, the bound
\[
\phi_t^-(\hat{M}_t)
\;\le\;\exp\!\big(e^{\beta L_{T^\vartheta}^-}\big)
\;=\;\exp\!\big(O((\log\log T)^{\beta q_-})\big)
\;=\;T^{o(1)}
\]
yields a contribution of order
$O(T^{\vartheta-1+o(1)})\to0$ as $T\to\infty$.

On $[T^\vartheta,T]$,
Proposition~\ref{prop:growing-threshold} gives the
conditional left-tail approximation:
uniformly for $y\in[K,L_t^-]$ and $t\in[T^\vartheta,T]$,
\[
\P\!\left(\hat{M}_t\le -y\,\middle|\,\mathcal{F}_{R_T}\right)
\;=\; \exp\!\bigl(-(1+o(1))\,C_\star Z_\infty\,e^{\sqrt{2}\,y}\bigr).
\]
The function~$\phi_t^-$ admits the Stieltjes representation
\[
\phi_t^-(x)
=\int_{[K,L_t^-]}\bbone_{\{x\le-y\}}\,\nu_t^-(dy),
\]
where
$\nu_t^-(dy):=\exp\!\big(e^{\beta K}\big)\,\delta_K(dy)
+\beta\,e^{\beta y}\exp\!\big(e^{\beta y}\big)\,
\bbone_{[K,L_t^-]}(y)\,dy$
has total mass
$\nu_t^-([K,L_t^-])
=\exp\!\big(e^{\beta L_t^-}\big)
=\exp\!\big((\log\log t)^{\beta q_-}\big)$.
Integrating the conditional tail against~$\nu_t^-$,
and using $\beta<\sqrt{2}$ to verify that the
double-exponential decay $\exp(-c\,e^{\sqrt{2}\,y})$
dominates the growth $\exp(e^{\beta y})$ as $y\to\infty$,
we obtain
\begin{equation}\label{eq:left_cond_mean}
\sup_{t\in[T^\vartheta,T]}
\E\!\left[\phi_t^-(\hat{M}_t)\,\middle|\,\mathcal{F}_{R_T}\right]
\;\longrightarrow\;0
\qquad\text{as $K\to\infty$, a.s.}
\end{equation}

For the fluctuations, set
$Y_t^-:=\phi_t^-(\hat{M}_t)-\E\!\big[\phi_t^-(\hat{M}_t)\,\big|\,\mathcal{F}_{R_T}\big]$
and
$\overline{F}_T^{-,K}:=\frac{1}{T}\int_{T^\vartheta}^T Y_t^-\,dt$.
Writing
\[
\xi_t^-(y)
:=\bbone_{\{\hat{M}_t\le-y\}}
-\P\!\left(\hat{M}_t\le-y\,\middle|\,\mathcal{F}_{R_T}\right)
\;=\; -\bigl(\bbone_{\{\hat{M}_t>-y\}}
  -\P\!\left(\hat{M}_t>-y\,\middle|\,\mathcal{F}_{R_T}\right)\bigr),
\]
Proposition~\ref{thm:decorrelation} applies to the
complementary events $\{\hat{M}_t>-y\}$, so the early-branching
contribution remains non-positive.
Fix $\rho\in(0,(2\sqrt{2})^{-1})$.
Since $L_t^-\le\rho\log R_T$ for all large~$T$,
the covariance bound reduces to
\[
\E\!\left[\xi_s^-(y)\,\xi_t^-(z)\,\middle|\,\mathcal{F}_{R_T}\right]
\;\le\; \P\!\left(\exists(u,v)\in
\mathcal{N}_s(-\rho\log R_T)\times
\mathcal{N}_t(-\rho\log R_T):\,
Q(u,v)\ge R_T\,\big|\,\mathcal{F}_{R_T}\right).
\]
By Theorem~\ref{thm:earlybr} {(applied with $x=0$)}, for every
$\varepsilon\in(0,\tfrac{1}{2}-\sqrt{2}\rho)$ and all
sufficiently large~$T$,
\[
\E[Y_s^-Y_t^-]
\;\le\; \exp\!\big(2(\log\log T)^{\beta q_-}\big)\,
R_T^{-1/2+\sqrt{2}\rho+\varepsilon}.
\]
The close-time contribution is bounded by
$4R_T\exp\!\big(2(\log\log T)^{\beta q_-}\big)/T$.
Combining,
\begin{equation}\label{eq:left_var_bound}
\E\!\left[\big(\overline{F}_T^{-,K}\big)^2\right]
\;\le\; \frac{4R_T\exp\!\big(2(\log\log T)^{\beta q_-}\big)}{T}
  \;+\; \exp\!\big(2(\log\log T)^{\beta q_-}\big)\,
    R_T^{-1/2+\sqrt{2}\rho+\varepsilon}.
\end{equation}
Along $T_n=\exp(n^\eta)$, the factor
$\exp\!\big(2(\log\log T_n)^{\beta q_-}\big)
=\exp\!\big(2(\eta\log n)^{\beta q_-}\big)$
is dominated as $n\to\infty$ by
$R_{T_n}^{1/2-\sqrt{2}\rho-\varepsilon}
=\exp\!\big((\tfrac{1}{2}-\sqrt{2}\rho-\varepsilon)
(\eta\log n)^b\big)$,
since $\beta q_-<b$ by~\eqref{eq:qminus_choice}.
Hence the right-hand side of~\eqref{eq:left_var_bound}
is summable in~$n$, and Borel--Cantelli gives
$\overline{F}_{T_n}^{-,K}\to0$ a.s.\ as $n\to\infty$,
for each fixed~$K$.

Combining with~\eqref{eq:left_cond_mean}
and extending to the full time axis via
Lemma~\ref{subtoful},
\begin{equation}\label{eq:left_tail_done}
\lim_{K\to\infty}\limsup_{T\to\infty}
\frac{1}{T}\int_0^T|f_K^-(\hat{M}_t)|\,dt=0
\qquad\text{a.s.}
\end{equation}

\medskip
\noindent\textbf{Conclusion.}\;
For every $K>0$,
\[
\frac{1}{T}\int_0^T f(\hat{M}_t)\,dt
= \frac{1}{T}\int_0^T f^{(K)}(\hat{M}_t)\,dt
  + \frac{1}{T}\int_0^T f_K^+(\hat{M}_t)\,dt
  + \frac{1}{T}\int_0^T f_K^-(\hat{M}_t)\,dt.
\]
By~\eqref{eq:bounded_core}, the first term converges a.s.\
as $T\to\infty$.
By~\eqref{eq:right_tail_done}
and~\eqref{eq:left_tail_done}, the tail contributions
vanish.
Letting $K\to\infty$ and applying dominated convergence,
\[
\frac{1}{T}\int_0^T f(\hat{M}_t)\,dt
\;\xrightarrow[T\to\infty]{}\;
\E\!\left[
f\!\left(\frac{\mathcal{G}+\log C_\star+\log Z_\infty}{\sqrt{2}}\right)
\,\middle|\,Z_\infty\right]
\qquad\text{a.s.}
\]
\end{proof}

\section{Brownian estimates and toolbox}

This section collects several useful results, primarily from Bramson \cite{Bramson1983ConvergenceOS} and Arguin et al. \cite{Arguin2016}.
Define the probability measure $\P_{0,x}^{t,y}$,
under which $B_t$ is a Brownian bridge from $x$ at time $0$ to $y$ at time $t$.
For a function $l : [s_1,s_2]\to \mathbb{R}$, we denote $B_l$ (or $B_l[s_1,s_2]$ in case of ambiguity) the set of paths lying strictly above $l$. Similarly, $B^l$ the set of paths lying strictly below $l$.

The following is a well-known expression for the probability that a Brownian bridge stays non-negative.
\begin{lemma}\label{bmbar}For $t>0$, and $x,y\geq0$,
\begin{equation}
    \P_{0,x}^{t,y}(B_s \geq 0 , s \in [0,t])= 1- e^{-\frac{2xy}{t}}.
\end{equation}
\end{lemma}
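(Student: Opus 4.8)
The plan is to prove Lemma~\ref{bmbar} by reducing the statement to the reflection principle for standard Brownian motion. First I would note that the probability in question is invariant under the usual decomposition of a Brownian bridge: writing $B$ under $\P_{0,x}^{t,y}$ as $B_s = x + \frac{s}{t}(y-x) + \widetilde B_s$, where $\widetilde B$ is a standard Brownian bridge from $0$ to $0$ on $[0,t]$, is not the cleanest route; instead I would work directly with the law of an unconditioned Brownian motion $(W_s)_{s\in[0,t]}$ started at $x$ and use the fact that the bridge law is the $h$-transform obtained by conditioning on $W_t = y$. Concretely, denoting by $p_t(a,b) = \frac{1}{\sqrt{2\pi t}}e^{-(b-a)^2/(2t)}$ the transition density, we have
\[
\P_{0,x}^{t,y}\bigl(B_s \ge 0,\ s\in[0,t]\bigr)
= \frac{\P_x\bigl(W_s \ge 0\ \forall s\in[0,t],\ W_t \in db\bigr)\big|_{b=y}}{p_t(x,y)\,db},
\]
so the task becomes computing the sub-density of a Brownian motion started at $x>0$ that stays non-negative on $[0,t]$ and ends near $y>0$.

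The key step is the reflection principle: for $x,y > 0$, the density of $W_t$ at $y$ on the event that $W$ never hits $0$ before time $t$ equals $p_t(x,y) - p_t(x,-y)$, since paths from $x$ to $y$ that do touch $0$ are in bijection (via reflection at the first hitting time of $0$) with all paths from $x$ to $-y$. Substituting this in gives
\[
\P_{0,x}^{t,y}\bigl(B_s \ge 0,\ s\in[0,t]\bigr)
= \frac{p_t(x,y) - p_t(x,-y)}{p_t(x,y)}
= 1 - \exp\!\left(-\frac{(x+y)^2 - (x-y)^2}{2t}\right)
= 1 - e^{-2xy/t},
\]
which is exactly the claimed identity. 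The boundary cases $x = 0$ or $y = 0$ follow by continuity (or directly, since both sides vanish).

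The main obstacle — really the only subtlety — is handling the conditioning rigorously: one must justify that the bridge measure $\P_{0,x}^{t,y}$ is indeed the regular conditional law of $W$ given $W_t = y$, and that the event $\{B_s \ge 0 \text{ on } [0,t]\}$ (a path-space event) behaves well under this disintegration, so that the ratio of sub-densities computes the conditional probability. This is standard — it follows from the continuity of the relevant densities in $y$ and the fact that $\{W_s \ge 0\ \forall s \le t\}$ is a closed event in the uniform topology — so I would state it briefly and cite a standard reference (e.g.\ the treatment of Brownian bridges and the reflection principle in Revuz--Yor or Karatzas--Shreve) rather than belabor the measure-theoretic details. Everything else is the one-line reflection computation above.
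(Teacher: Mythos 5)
Your proof is correct, and it is the standard reflection-principle derivation of this classical identity. The paper itself gives no proof of Lemma~\ref{bmbar}, merely citing it as well-known, so there is nothing to compare against; your argument (bridge law as disintegration of Wiener measure, density of a killed Brownian motion via reflection at the first hitting time of $0$, then take the ratio $1 - p_t(x,-y)/p_t(x,y) = 1 - e^{-2xy/t}$) is exactly the proof the paper implicitly appeals to, and the measure-theoretic caveat you flag about regular conditional laws is handled by any standard reference as you say.
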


The following result provides an upper bound on the probability that a Brownian bridge stays below an affine function.
\begin{lemma}[Lemma 3.4, \cite{Arguin2016}]\label{34Arguin}
Let \( Z_1, Z_2 \geq 0 \) and \( r_1, r_2 \geq 0 \). Then, for \( t > r_1 + r_2 \),
\[
\mathbb{P}_{0,0}^{t,0} \left( B_s \leq \left(1 - \frac{s}{t} \right) Z_1 + \frac{s}{t} Z_2, \quad \forall s \in [r_1, t - r_2] \right) \leq \frac{2}{t - r_1 - r_2} \prod_{i=1}^2 \left( Z(r_i) + \sqrt{r_i} \right),
\]
where
\[
Z(r_1) := \left(1 - \frac{r_1}{t} \right) Z_1 + \frac{r_1}{t} Z_2, \quad Z(r_2) := \frac{r_2}{t} Z_1 + \left(1 - \frac{r_2}{t} \right) Z_2.
\]
\end{lemma}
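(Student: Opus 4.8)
The plan is to reduce the affine--barrier estimate to the exact Brownian--bridge formula of Lemma~\ref{bmbar} by conditioning on two well--chosen interior values, and then to control the resulting endpoint expectation with elementary Gaussian bounds.

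First I would pass to the gap process. Write $g(s):=\bigl(1-\tfrac{s}{t}\bigr)Z_1+\tfrac{s}{t}Z_2$ for the affine function in the statement, and set $W_s:=g(s)-B_s$ on $[0,t]$. Since $g$ is deterministic and affine with $g(0)=Z_1$ and $g(t)=Z_2$, the process $W$ is a Brownian bridge from $Z_1$ at time $0$ to $Z_2$ at time $t$; thus $W$ is Gaussian with mean $g(s)$ and variance $s(t-s)/t$, and the event in the statement is exactly $\{W_s\ge 0\ \text{for all } s\in[r_1,t-r_2]\}$. In particular $\E[W_{r_1}]=Z(r_1)$ and $\E[W_{t-r_2}]=Z(r_2)$, which are nonnegative since they are convex combinations of $Z_1,Z_2\ge 0$ (using $r_1+r_2<t$), while $\mathrm{Var}(W_{r_1})=r_1(t-r_1)/t\le r_1$ and $\mathrm{Var}(W_{t-r_2})=r_2(t-r_2)/t\le r_2$.

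Next I would condition on the pair $(W_{r_1},W_{t-r_2})$. By the Markov property of the Brownian bridge, conditionally on $\{W_{r_1}=a,\ W_{t-r_2}=b\}$ the restriction of $W$ to $[r_1,t-r_2]$ is a Brownian bridge from $a$ to $b$ over a time interval of length $t-r_1-r_2>0$; on the event of interest $a,b\ge 0$, so Lemma~\ref{bmbar} together with $1-e^{-u}\le u$ gives
\[
\P\bigl(W_s\ge 0\ \forall s\in[r_1,t-r_2]\ \big|\ W_{r_1}=a,\ W_{t-r_2}=b\bigr)
= 1-\exp\!\Bigl(-\tfrac{2ab}{t-r_1-r_2}\Bigr)\ \le\ \frac{2ab}{t-r_1-r_2}.
\]
Taking expectations and using that the event forces $W_{r_1},W_{t-r_2}\ge 0$,
\[
\P\bigl(W_s\ge 0\ \forall s\in[r_1,t-r_2]\bigr)\ \le\ \frac{2}{t-r_1-r_2}\,\E\bigl[W_{r_1}^{+}\,W_{t-r_2}^{+}\bigr].
\]

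Finally I would estimate this expectation. Writing $W_{r_1}=Z(r_1)+\xi_1$ and $W_{t-r_2}=Z(r_2)+\xi_2$ with $\xi_1,\xi_2$ jointly centred Gaussian, the nonnegativity of $Z(r_1),Z(r_2)$ gives $W_{r_1}^{+}\le Z(r_1)+|\xi_1|$ and $W_{t-r_2}^{+}\le Z(r_2)+|\xi_2|$; expanding the product and using $\E|\xi_1|\le\sqrt{r_1}$, $\E|\xi_2|\le\sqrt{r_2}$, and $\E[|\xi_1||\xi_2|]\le\sqrt{r_1 r_2}$ by Cauchy--Schwarz yields $\E[W_{r_1}^{+}W_{t-r_2}^{+}]\le\bigl(Z(r_1)+\sqrt{r_1}\bigr)\bigl(Z(r_2)+\sqrt{r_2}\bigr)$, which gives the claim. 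The argument is essentially routine; the only places that need care are the reduction $W_{r_1}^{+}\le Z(r_1)+|\xi_1|$, which relies crucially on the affine barrier having nonnegative endpoints, and keeping track of the variance bound $s(t-s)/t\le\min\{s,t-s\}$. I do not expect a genuine obstacle beyond making sure the conditioning step invokes the bridge Markov property on the correct sub-interval of length $t-r_1-r_2$.
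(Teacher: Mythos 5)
Your proof plan is correct, and every step checks out: the gap process $W_s := g(s)-B_s$ is indeed a Brownian bridge from $Z_1$ to $Z_2$ with mean $g(s)$ and variance $s(t-s)/t$; conditioning on $(W_{r_1},W_{t-r_2})$ and applying the Markov property of the bridge on the sub-interval of length $t-r_1-r_2$ lets you invoke Lemma~\ref{bmbar}; the bound $1-e^{-u}\le u$ gives the factor $2/(t-r_1-r_2)$; and the final expectation bound via $W_{r_1}^{+}\le Z(r_1)+|\xi_1|$, the identities $\E|\xi_i|\le\sqrt{\mathrm{Var}(\xi_i)}\le\sqrt{r_i}$, and Cauchy--Schwarz for the cross term cleanly factorizes as $(Z(r_1)+\sqrt{r_1})(Z(r_2)+\sqrt{r_2})$. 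Note, however, that the paper itself states this result as a citation (Lemma 3.4 of \cite{Arguin2016}) and does not reprove it, so there is no in-paper argument to compare against; your route is the natural one (reduce to a bridge staying positive, condition on interior points, linearize the exponential) and is, as far as I can tell, essentially the standard proof one would find in the cited reference.
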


The following identity, derived from the reflection principle, gives the probability that a Brownian bridge remains non-negative over an interval.
\begin{lemma}\label{reflectionprinciple0}
For $y>0$ and $0<r<\gamma$,
\begin{equation}
    \P(\forall s \in [r, \gamma] \, , \, B_s \geq 0 \mid B_\gamma = y ) = 1- 2\P(B_r \leq 0 \mid B_\gamma = y ).
\end{equation}
\end{lemma}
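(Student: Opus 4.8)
The plan is to prove the identity by a reflection argument on the unconditioned Brownian motion started at $0$, comparing densities of the endpoint $B_\gamma$. I would write $E:=\{B_s\ge 0\ \text{for all }s\in[r,\gamma]\}$, so that, up to a $\P$-null set,
\[
E^c=\{B_r<0\}\ \sqcup\ \bigl\{B_r\ge 0,\ \exists s\in(r,\gamma]:\,B_s<0\bigr\},
\]
and on the second event set $\tau:=\inf\{s>r:\,B_s=0\}$; since $B_r\ge 0$, continuity of paths forces this event to coincide with $\{B_r\ge 0,\ \tau\le\gamma\}$.

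The key step is the reflection principle applied at the stopping time $\tau$: reflecting the path after $\tau$ (replacing $(B_s)_{s>\tau}$ by $(-B_s)_{s>\tau}$, using $B_\tau=0$) preserves the law of Brownian motion, leaves $\tau$ and $B_r$ unchanged (the latter because $r<\tau$ a.s.\ on this event), and maps $B_\gamma$ to $-B_\gamma$. Since for $y>0$ one has $\{B_r\ge 0,\ B_\gamma=-y\}\subseteq\{B_r\ge 0,\ \tau\le\gamma\}$ by the intermediate value theorem between times $r$ and $\gamma$, this gives, at the level of endpoint densities, $\P(B_r\ge 0,\ \tau\le\gamma,\ B_\gamma\in dy)=\P(B_r\ge 0,\ B_\gamma\in -dy)$. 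Combining this with the sign split above and with the reflection symmetry $B\mapsto -B$ (which yields $\P(B_r\ge 0,\ B_\gamma\in -dy)=\P(B_r\le 0,\ B_\gamma\in dy)$) gives $\P(E^c,\ B_\gamma\in dy)=2\,\P(B_r\le 0,\ B_\gamma\in dy)$; dividing by the strictly positive Gaussian density $\P(B_\gamma\in dy)$ and taking complements yields the claim.

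I expect the only real subtleties to be bookkeeping: the event identities hold only modulo $\P$-null sets (the cases $B_r=0$, or $B_s=0$ at an endpoint of the interval), which is harmless after dividing by the positive density $\P(B_\gamma\in dy)$, and one should pin down precisely the strong-Markov/reflection statement invoked at $\tau$. As a self-contained alternative and cross-check, I would instead condition on $B_r$: conditionally on $B_r=a$ and $B_\gamma=y$ the path on $[r,\gamma]$ is a Brownian bridge, hence by Lemma~\ref{bmbar} it stays nonnegative with probability $\bigl(1-e^{-2ay/(\gamma-r)}\bigr)\mathbf 1\{a>0\}$; integrating this against the law of $B_r$ given $B_\gamma=y$ — Gaussian with mean $\mu:=ry/\gamma$ and variance $\sigma^2:=r(\gamma-r)/\gamma$ — and completing the square (noting that, with $\lambda:=2y/(\gamma-r)$, one has $-\mu\lambda+\tfrac12\sigma^2\lambda^2=0$ and $\sigma^2\lambda=2\mu$) reproduces exactly the same identity.
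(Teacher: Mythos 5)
The paper states Lemma~\ref{reflectionprinciple0} without proof (it is merely described as ``derived from the reflection principle''), so there is no argument to compare against; your task here is to supply one, and you have done so correctly. Your primary argument is the canonical reflection derivation: decompose $E^c$ according to the sign of $B_r$, reflect at the first hitting time $\tau$ of $0$ after time $r$, observe that reflection fixes $B_r$ and $\tau$ while flipping $B_\gamma$, and that for $y>0$ the event $\{B_r\ge 0,\ B_\gamma\in -dy\}$ forces $\tau\le\gamma$; combining with the global sign flip $B\mapsto -B$ gives $\P(E^c,\ B_\gamma\in dy)=2\P(B_r\le 0,\ B_\gamma\in dy)$, and dividing by the endpoint density finishes. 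The null-set caveats you flag (the cases $B_r=0$, $\tau=\gamma$, $B_\gamma=0$) are indeed the only technical points and are genuinely harmless. Your alternative computation via Lemma~\ref{bmbar} is also correct and is a nice internal consistency check within the paper's toolbox: conditioning on $B_r=a$ reduces to the bridge crossing probability $1-e^{-2ay/(\gamma-r)}$, and the Gaussian identities $-\mu\lambda+\tfrac12\sigma^2\lambda^2=0$, $\sigma^2\lambda=2\mu$ (with $\mu=ry/\gamma$, $\sigma^2=r(\gamma-r)/\gamma$, $\lambda=2y/(\gamma-r)$) reproduce $1-2\P(B_r\le 0\mid B_\gamma=y)$ exactly. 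Both routes are sound; the reflection argument is more structural, while the second route has the advantage of requiring only ingredients already stated in the paper and no appeal to the strong Markov property.
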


We now state a useful monotonicity result.
\begin{lemma}[Lemma 2.6, \cite{Bramson1983ConvergenceOS}]\label{monoticitypath}
    Assume that $l_1(s)$, $l_2(s)$, and $\wedge(s)$ satisfy $l_1(s)\leq l_2(s) \leq \wedge(s)$ for $s\in [0,t]$, and that $\P(B_{l_2}[0,t])>0$. Then,
    \begin{equation}
        \P_{0,0}^{t,0}(B^{\wedge}\mid B_{l_1})\geq \P_{0,0}^{t,0}(B^{\wedge}\mid B_{l_2}),
    \end{equation}
    and
    \begin{equation}
        \P_{0,0}^{t,0}(B_{\wedge}\mid B_{l_1})\leq \P_{0,0}^{t,0}(B_{\wedge}\mid B_{l_2}).
    \end{equation}
\end{lemma}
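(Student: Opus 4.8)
The plan is to reduce the statement to a finite-dimensional inequality for Gaussian vectors, settle that via the FKG inequality, and then pass to the continuum. The guiding picture: conditioning the bridge to stay strictly above a barrier pushes the whole path upward, the more so the higher the barrier, so the conditioned law $\nu_2:=\mathbb{P}_{0,0}^{t,0}(\cdot\mid B_{l_2})$ stochastically dominates $\nu_1:=\mathbb{P}_{0,0}^{t,0}(\cdot\mid B_{l_1})$ in the pointwise order on $C[0,t]$. Granting this, both conclusions follow at once: $B^{\wedge}=\{f:f(s)<\wedge(s)\ \forall s\}$ is a decreasing event, so $\nu_2(B^{\wedge})\le\nu_1(B^{\wedge})$, and $B_{\wedge}=\{f:f(s)>\wedge(s)\ \forall s\}$ is increasing, so $\nu_2(B_{\wedge})\ge\nu_1(B_{\wedge})$. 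In fact the second inequality is elementary: since $l_j\le\wedge$ pointwise, $B_{\wedge}\subseteq B_{l_1},B_{l_2}$, so $\mathbb{P}(B_{\wedge}\mid B_{l_j})=\mathbb{P}(B_{\wedge})/\mathbb{P}(B_{l_j})$, and $\mathbb{P}(B_{l_1})\ge\mathbb{P}(B_{l_2})$ because $l_1\le l_2$ forces $B_{l_2}\subseteq B_{l_1}$; the real content is the first inequality.

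For the finite-dimensional step, fix a grid $0=\sigma_0<\dots<\sigma_N=t$ and consider $(B_{\sigma_1},\dots,B_{\sigma_{N-1}})$ under $\mathbb{P}_{0,0}^{t,0}$. This is a centred Gaussian vector whose precision matrix is tridiagonal with nonpositive off-diagonal entries — it is the leading $(N-1)\times(N-1)$ block of the tridiagonal precision matrix of Brownian motion sampled at $\sigma_1,\dots,\sigma_N$ — so its density is log-supermodular; the law is thus MTP$_2$ and satisfies the FKG inequality. The discretized barrier events $A_j:=\{B_{\sigma_i}\ge l_j(\sigma_i)\ \forall i\}$ are coordinate boxes $\prod_i[l_j(\sigma_i),\infty)$, hence increasing with log-supermodular indicators, and $A_2\subseteq A_1$ since $l_1\le l_2$. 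Conditioning an MTP$_2$ law on such a box preserves MTP$_2$, so $\nu_1^{(N)}:=\mathbb{P}_{0,0}^{t,0}(\cdot\mid A_1)$ is again FKG, and for any bounded increasing $\phi$, using $A_2\subseteq A_1$,
\[
\mathbb{E}[\phi\mid A_2]-\mathbb{E}[\phi\mid A_1]
=\mathbb{E}_{\nu_1^{(N)}}[\phi\mid A_2]-\mathbb{E}_{\nu_1^{(N)}}[\phi]
=\frac{\operatorname{Cov}_{\nu_1^{(N)}}(\phi,\mathbf{1}_{A_2})}{\nu_1^{(N)}(A_2)}\ \ge\ 0,
\]
by FKG applied to the increasing functions $\phi$ and $\mathbf{1}_{A_2}$. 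Thus $\mathbb{P}_{0,0}^{t,0}(\cdot\mid A_2)$ stochastically dominates $\mathbb{P}_{0,0}^{t,0}(\cdot\mid A_1)$ in $\mathbb{R}^{N-1}$; evaluated on the decreasing indicator $\mathbf{1}\{B_{\sigma_i}\le\wedge(\sigma_i)\ \forall i\}$ this gives the discretized form of the first inequality.

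It remains to refine the grid along the dyadics and take limits. By continuity of the paths and barriers, the discretized events decrease to the corresponding \emph{closed}-barrier events on $[0,t]$, and the attached conditional probabilities converge — here the hypothesis $\mathbb{P}(B_{l_2}[0,t])>0$ enters, together with $l_1\le l_2$, to keep the conditioning probabilities bounded away from $0$. This establishes the closed-barrier analogue. To recover the strict events $B_{l_j}$ and $B^{\wedge}$ of the statement, first dispose of the degenerate case in which $l_2$ meets $\wedge$ somewhere (then no path is simultaneously strictly above $l_2$ and strictly below $\wedge$ there, so $\mathbb{P}(B^{\wedge}\mid B_{l_2})=0$ and the first inequality is immediate); otherwise $c:=\min_{[0,t]}(\wedge-l_2)>0$, and one applies the closed-barrier version with $l_j$ replaced by $l_j+\varepsilon$ and $\wedge$ by $\wedge-\varepsilon$ for $\varepsilon\in(0,c/2)$ (adjusting trivially at the two endpoints, where every path equals $0$), then lets $\varepsilon\downarrow0$: by compactness of $[0,t]$, $\{f\le\wedge-\varepsilon\}\uparrow B^{\wedge}$ and $\{f\ge l_j+\varepsilon\}\uparrow B_{l_j}$, so the conditional probabilities converge to their strict counterparts and the inequality persists. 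The only genuinely delicate point — the main obstacle — is the bookkeeping in this continuum-and-strictness limit while keeping every conditioning event of positive probability; the probabilistic inputs (MTP$_2$ of the Brownian-bridge finite-dimensional laws, FKG) are classical.
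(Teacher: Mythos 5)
The paper does not actually prove this statement: it is quoted as Lemma~2.6 of Bramson \cite{Bramson1983ConvergenceOS} and used as an external input, so your self-contained argument is by construction a different route. Its core is sound. The reduction is right: the second inequality is elementary because $l_j\le\wedge$ forces $B_{\wedge}\subseteq B_{l_j}$ while $l_1\le l_2$ gives $\P(B_{l_1})\ge\P(B_{l_2})$; and the first inequality is precisely a negative-correlation statement, under the bridge law conditioned on $B_{l_1}$, between the increasing event $B_{l_2}$ and the decreasing event $B^{\wedge}$. Your finite-dimensional step delivers exactly that: the interior marginals of the bridge have a tridiagonal precision matrix with nonpositive off-diagonal entries, hence are $\mathrm{MTP}_2$; conditioning on the box $A_1$ (a product of half-lines) preserves $\mathrm{MTP}_2$; and the FKG inequality for the conditioned law gives $\operatorname{Cov}(\phi,\mathbf 1_{A_2})\ge 0$, which is the discretized claim. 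Compared with the paper's bare citation, this buys a short, checkable proof from a classical correlation inequality, at the cost of an approximation step to pass from grids and weak barriers to the continuum events with strict inequalities.

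That limit step is the one place where your proof, as written, does not quite cover the lemma as stated, because it tacitly assumes regularity of the barriers and not only of the paths. Identifying the decreasing limit of the grid events with the closed-barrier event on all of $[0,t]$ uses (semi)continuity of $l_1,l_2$; the definition $c:=\min_{[0,t]}(\wedge-l_2)>0$ presupposes that the minimum is attained; and the monotone limits $\{f\ge l_j+\varepsilon \text{ on }[0,t]\}\uparrow B_{l_j}$ and $\{f\le \wedge-\varepsilon \text{ on }[0,t]\}\uparrow B^{\wedge}$ need $l_j$ upper semicontinuous and $\wedge$ lower semicontinuous so that pointwise strict inequalities on a compact interval force a uniform gap. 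The statement here (unlike Lemma~\ref{prop63bramson}, which explicitly assumes semicontinuity) imposes no such hypothesis, so for completely arbitrary barriers your $\varepsilon$-shift and grid-refinement bookkeeping can fail, e.g.\ $\P(B_{l_2})>0$ need not give any $\varepsilon>0$ with $\P(\forall s:\,B_s\ge l_2(s)+\varepsilon)>0$. This is a caveat rather than a collapse of the approach: in every use made of the lemma in this paper the barriers are affine functions plus or minus $\Gamma_{t,\alpha}$-type terms on subintervals, for which your argument goes through verbatim; to prove the general statement you should either add the regularity hypothesis explicitly or replace the continuum step by an argument tailored to irregular barriers.
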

Define the function $\Gamma_{t,\alpha}(s) \coloneqq (s \wedge (t-s))^\alpha$ for $\alpha \geq 0$. The following result establishes that if a Brownian bridge remains above \(-\Gamma_{t,\alpha}\) for some \(\alpha \in (0,1/2)\) on the interval \([r, t - r]\), then it remains above \(\Gamma_{t,\alpha}\) on the same interval.
\begin{lemma}[Proposition 6.1, \cite{Bramson1983ConvergenceOS}]\label{prop61bramson}
    Let $C>0$ and for all $t$, $L_t : [0,t]\to \mathbb{R}$ be a family of functions.
For any $\alpha \in (0,1/2)$, suppose there exists an $r_0$ such that $L_t(s) \leq C\Gamma_{t,\alpha}(s)$ for all $s \in [r_0,t-r_0]$ and for all $t$. Then for any $\delta>0$, there exists $r_1$ such that for all $r\geq r_1$  and $t\geq 3r$,
\begin{equation}\label{onfr5}
        \frac{\P_{0,0}^{t,0}( \forall s \in [r,t-r] : B_s > L_t(s) +C\Gamma_{t,\alpha}(s))}{\P_{0,0}^{t,0}( \forall s \in [r,t-r] : B_s > L_t(s) -C\Gamma_{t,\alpha}(s))} \geq 1- r^{\alpha-1/2 +\delta}.
\end{equation}
\end{lemma}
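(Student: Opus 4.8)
\noindent\emph{Proof strategy.} Write $g:=L_t-C\Gamma_{t,\alpha}$ and $h:=L_t+C\Gamma_{t,\alpha}=g+2C\Gamma_{t,\alpha}$, and let $A^{-}:=\{B_s>g(s)\ \forall\,s\in[r,t-r]\}$, $A^{+}:=\{B_s>h(s)\ \forall\,s\in[r,t-r]\}$ be the events appearing in the denominator and numerator of \eqref{onfr5}. Since $\Gamma_{t,\alpha}\ge 0$ we have $A^{+}\subseteq A^{-}$, so \eqref{onfr5} is equivalent to
\[
\P_{0,0}^{t,0}\!\big(A^{-}\setminus A^{+}\big)\ \le\ r^{\alpha-1/2+\delta}\,\P_{0,0}^{t,0}\!\big(A^{-}\big).
\]
My plan is to \emph{peel the interval $[r,t-r]$ inward along the dyadic scales $2^{j}r$}, upgrading the lower barrier $g$ to the upper barrier $h$ one dyadic shell at a time. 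Each such upgrade should cost only a factor $1-O\big((2^{j}r)^{\alpha-1/2+\delta'}\big)$: on a shell of length $\asymp 2^{j}r$ the barrier is raised by only $\asymp(2^{j}r)^{\alpha}$, which is a factor $(2^{j}r)^{\alpha-1/2}$ below the Brownian fluctuation scale $\sqrt{2^{j}r}$. Multiplying the $O(\log(t/r))$ factors and using $\sum_{j\ge 0}(2^{j}r)^{\alpha-1/2+\delta'}\asymp r^{\alpha-1/2+\delta'}$ — the geometric series converges since $\alpha<1/2$ — then gives the bound after absorbing the logarithm and the constants into $\delta>\delta'$.

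\noindent\emph{The peeling step.} For $0\le j\le j_\ast:=\lfloor\log_2\tfrac{t}{2r}\rfloor$ set $S_j:=[2^{j}r,2^{j+1}r]\cup[t-2^{j+1}r,t-2^{j}r]$ (leaving a central interval), and put
\[
p_j:=\P_{0,0}^{t,0}\!\Big(B>g\text{ on }[r,t-r]\ \text{and}\ B>h\text{ on }S_0\cup\dots\cup S_{j-1}\Big),
\]
so that $p_0=\P(A^{-})$ while $p_{j_\ast+1}$ differs from $\P(A^{+})$ only through the central interval, which is treated identically. Conditioning on the values of $B$ at the dyadic times $2^{i}r$, $t-2^{i}r$ makes the restrictions of $B$ to the shells independent Brownian bridges with those pinned endpoints, so that $p_j\ge(1-\varepsilon_j)\,p_{j-1}$ follows if, for the pinned bridge $X$ on $S_{j-1}$,
\[
\P\big(X>h\text{ on }S_{j-1}\mid\text{endpoints}\big)\ \ge\ (1-\varepsilon_j)\,\P\big(X>g\text{ on }S_{j-1}\mid\text{endpoints}\big),
\]
at least for endpoints near their typical height $\asymp\sqrt{2^{j}r}$ — the remaining, small-probability configurations being dealt with in the next paragraph. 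To get the displayed inequality I would sandwich $g$ and $h$ on the shell between affine barriers at heights $\asymp(2^{j}r)^{\alpha}$ (the upper sandwich coming from $L_t\le C\Gamma_{t,\alpha}$, and the monotonicity Lemma~\ref{monoticitypath} disposing of the portions where $L_t$ is so negative that the corresponding constraint is inactive), then invoke Lemma~\ref{bmbar}, which for an affine barrier $\ell$ on $[c,d]$ gives $\P(X>\ell\mid x_c,x_d)=1-e^{-2(x_c-\ell(c))_+(x_d-\ell(d))_+/(d-c)}$, together with Lemma~\ref{reflectionprinciple0} for the two one-sided pieces. The ratio then becomes $\tfrac{1-e^{-2q_h}}{1-e^{-2q_g}}$ with $q_g-q_h\asymp(2^{j}r)^{\alpha}(x_c+x_d)/(2^{j}r)$, which for endpoints $x_c,x_d\asymp\sqrt{2^{j}r}$ equals $1-O\big((2^{j}r)^{\alpha-1/2}\big)$, as required.

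\noindent\emph{Concentration of the pinned values.} The pointwise estimate above degrades when some pinned value $B_{2^{i}r}$ or $B_{t-2^{i}r}$ is atypically small, so I must show that under the law weighting by $p_{j-1}$ the probability of such a pinching is itself $O(r^{\alpha-1/2+\delta'})$. This I would obtain from the upper bound of Lemma~\ref{34Arguin}, applied to the Brownian bridge on either side of the time $2^{i}r$, combined again with Lemma~\ref{monoticitypath} (conditioning a bridge to stay above a lower barrier only raises it stochastically): together these show that a bridge conditioned to stay above $g$ has value $\le\lambda\sqrt{2^{i}r}$ at the interior time $2^{i}r$ only with probability decaying polynomially in $\lambda$, which keeps the exceptional contribution within budget.

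\noindent\emph{The main obstacle.} The crux is the innermost shell $S_0=[r,2r]\cup[t-2r,t-r]$: there the barrier is raised by $\asymp C r^{\alpha}$, which is \emph{not} asymptotically negligible compared with the fluctuation scale $\sqrt r$ — only a factor $r^{\alpha-1/2}$ smaller — so every estimate at this scale must be carried out sharply enough to produce exactly the exponent $\alpha-1/2$, the free parameter $\delta$ being spent only on the logarithmic number of shells and on absolute constants. The companion difficulty is the accounting of the previous paragraph: a crude bound on how often the conditioned bridge is pinched low at an interior dyadic time loses a constant factor in the exponent, so the sharp statement genuinely requires the refined concentration of the conditioned bridge's positions furnished by Lemmas~\ref{34Arguin} and~\ref{monoticitypath}.
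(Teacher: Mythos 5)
Your proposal takes a genuinely different route from the paper. The paper simply cites Bramson's Proposition~6.1 and extracts the quantitative bound \eqref{joig6} from Bramson's equations (6.9), (6.10) and Lemmas~2.6, 2.7; the exponent $\alpha-1/2+\delta$ then falls out by optimising Bramson's free parameter $\epsilon$. You instead sketch a self-contained dyadic peeling argument. The strategy is plausible and correctly identifies the right heuristic (the innermost shell $[r,2r]\cup[t-2r,t-r]$ dominates, with gap $r^\alpha$ against fluctuation $\sqrt r$, giving exponent $\alpha-1/2$), but it is not a complete proof, and the step you flag as the ``main obstacle'' is where the genuine gap lies.

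Concretely: the per-shell ratio bound $1-O\bigl((2^jr)^{\alpha-1/2}\bigr)$ is only established \emph{pointwise} for endpoints $x_c,x_d\asymp\sqrt{2^jr}$, and you need to integrate it against the conditional law of the pinned values $B_{2^{j-1}r},B_{2^jr}$ (and their mirror images) given all earlier conditioning. Near the barrier, $\frac{1-e^{-2q_h}}{1-e^{-2q_g}}\to 0$ rather than $1$, and the entropic-repulsion tail of the conditioned bridge decays only \emph{polynomially} in $\lambda$ for $x_c\le\lambda\sqrt{2^jr}$ — you say as much. Whether the product of these two polynomial quantities integrates to precisely $O\bigl((2^jr)^{\alpha-1/2+\delta'}\bigr)$ per shell is exactly the delicate computation Bramson carries out, and your sketch does not do it; it asserts that Lemma~\ref{34Arguin} and Lemma~\ref{monoticitypath} ``keep the exceptional contribution within budget'' without verifying the exponent survives. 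There is also an internal inconsistency in your accounting: you first say you will ``multiply the $O(\log(t/r))$ factors'' and later ``absorb the logarithm,'' yet a moment later you observe correctly that the geometric series already sums to $\asymp r^{\alpha-1/2+\delta'}$ uniformly in $t$ — if there were a genuine $\log(t/r)$ factor the statement would fail to be uniform in $t$, so you should decide which it is. Finally, your use of Lemma~\ref{monoticitypath} to conclude that conditioning on a higher lower barrier stochastically raises the marginal of $B_{2^ir}$ is correct in spirit but requires an explicit limiting argument (taking $\wedge$ to be a threshold supported at that single time), since as stated the lemma compares conditional probabilities of path events, not marginals. None of these is a fatal flaw — the paper's citation to Bramson shows the overall computation does close — but filling them in is the entire content of the proof rather than a routine afterthought.
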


\begin{proof}
Inequality~\eqref{onfr5} follows directly from the proof of Proposition~6.1 in~\cite{Bramson1983ConvergenceOS}. For any $\epsilon \in (1/2,1-\alpha)$, there exist constants $C_3$, $a_1$, and $a_2$ such that for large $r$, the LHS of \eqref{onfr5} is greater than 
\begin{equation}\label{joig6}
    \left(e^{-C_3r^{\alpha +\epsilon-1}}\right)\left(1-a_1\sum_{k=\ceil{r}}^{+\infty}ke^{-a_2k^{2\epsilon -1}}\right),
\end{equation}
uniformly in $t$. This lower bound is derived from Eqs.~(6.9) and (6.10) and Lemmas 2.6 and 2.7 in~\cite{Bramson1983ConvergenceOS}. Therefore, for any given $\delta>0$, choosing $\epsilon \in (1/2, \min(1/2+\delta, 1-\alpha))$ in the lower bound \eqref{joig6} yields inequality~\eqref{onfr5} for sufficiently large $r$.
\end{proof}

The following result, which is proved via a coupling argument, shows that the ratio considered in Lemma \ref{prop61bramson} is increasing with respect to the endpoint.
\begin{lemma}[Proposition 6.3, \cite{Bramson1983ConvergenceOS}]\label{prop63bramson} Assume that the functions $l_1$ and $l_2$ are upper semi-continuous except at most finitely many points, and that $l_1(s)\leq l_2(s)$ for $s\in [0,t]$. Then, provided that the denominator is nonzero,
\begin{equation}
      \begin{aligned}
               & \frac{\P_{0,0}^{t,y}( \forall s \in [r,t-r] : B_s > l_2(s))}{\P_{0,0}^{t,y}( \forall s \in [r,t-r] : B_s > l_1(s))}     & & \text{ is increasing in $y$.}\\
      \end{aligned}
    \end{equation}
\end{lemma}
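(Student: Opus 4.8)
The plan is to recast the claim as a positive–correlation (FKG) statement for the Brownian bridge conditioned to stay above a curve, and then, if one prefers, to repackage it as a monotone coupling. Throughout write $B_{l_i}:=\{B_s>l_i(s)\ \text{for all }s\in[r,t-r]\}$, and fix $y<y'$ with all probabilities below positive (the hypothesis ``provided the denominator is nonzero'' lets me cross–multiply and discard the degenerate cases). Since $l_1\le l_2$ we have $B_{l_2}\subseteq B_{l_1}$, so the ratio in the statement is exactly $\P_{0,0}^{t,y}(B_{l_2}\mid B_{l_1})$, and it suffices to prove
\[
\P_{0,0}^{t,y}(B_{l_2}\mid B_{l_1})\ \le\ \P_{0,0}^{t,y'}(B_{l_2}\mid B_{l_1}),
\]
i.e.\ that the bridge conditioned to clear $l_1$ is the likelier to also clear $l_2$ the higher its terminal value.

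The mechanism is that raising the endpoint only reweights the law by an \emph{increasing functional of the path}. Both $B_{l_1},B_{l_2}$ are measurable with respect to $\mathcal G:=\sigma(B_s:\ s\le t-r)$, and by the Markov property at time $t-r$ — conditionally on $B_{t-r}$ the path on $[0,t-r]$ is a bridge from $0$ to $B_{t-r}$, not depending on the global endpoint, and only the Gaussian marginal of $B_{t-r}$ feels the endpoint — the two bridge laws are mutually absolutely continuous on $\mathcal G$ with
\[
\frac{d\P_{0,0}^{t,y'}}{d\P_{0,0}^{t,y}}\bigg|_{\mathcal G}\ =\ c(y,y')\,e^{\kappa B_{t-r}},\qquad \kappa=\frac{y'-y}{r}\ >\ 0 .
\]
This density is nondecreasing in the path for the pointwise order. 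Using $\mathbf{1}_{B_{l_2}}\mathbf{1}_{B_{l_1}}=\mathbf{1}_{B_{l_2}}$, a short computation yields
\[
\P_{0,0}^{t,y'}(B_{l_2}\mid B_{l_1})-\P_{0,0}^{t,y}(B_{l_2}\mid B_{l_1})
=\frac{\operatorname{Cov}_{\,\P_{0,0}^{t,y}(\cdot\mid B_{l_1})}\!\bigl(\mathbf{1}_{B_{l_2}},\,e^{\kappa B_{t-r}}\bigr)}{\E_{0,0}^{t,y}\!\bigl[e^{\kappa B_{t-r}}\mid B_{l_1}\bigr]}.
\]
Both $\mathbf{1}_{B_{l_2}}$ and $e^{\kappa B_{t-r}}$ are nondecreasing functionals of the path, so it remains to show that the bridge conditioned on $B_{l_1}$ satisfies the FKG (association) inequality. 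Equivalently, this positive correlation upgrades, via Strassen's theorem, to a coupling of $\P_{0,0}^{t,y}(\cdot\mid B_{l_1})$ and $\P_{0,0}^{t,y'}(\cdot\mid B_{l_1})$ in which the second path dominates the first everywhere — which is the ``coupling argument'' form of the lemma, and from which monotonicity is immediate, since $B_{l_2}$ for the lower bridge forces $B_{l_2}$ for the higher one.

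To establish the FKG property I would discretise: along a refining sequence of grids $r=s_0<\dots<s_n=t-r$ (always including the endpoint $t-r$ and, to handle non-continuous $l_i$, their finitely many discontinuity points), work with the finite–dimensional events $\{B_{s_j}>l_i(s_j)\ \forall j\}$, and let the mesh vanish; since $B$ and (in the cases of interest) the $l_i$ are continuous, these events decrease to $B_{l_i}$ up to a $\P_{0,0}^{t,y}$–null set, so the probabilities converge — this is precisely what the upper–semicontinuity and ``finitely many exceptional points'' hypotheses are for. On a finite grid the vector $(B_{s_j})_j$ is Gaussian with covariances $s_i(t-s_j)/t\ge0$ and tridiagonal precision matrix with nonpositive off–diagonal entries, hence its density is MTP$_2$; since $\{x:\ x_j>l_1(s_j)\ \forall j\}$ is a sublattice of $\mathbb{R}^{\,n+1}$, the restricted density is still MTP$_2$, so the conditioned Gaussian vector satisfies FKG and the covariance above is $\ge0$. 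Passing to the limit concludes.

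The main obstacle is the step just described: that association survives the conditioning on $B_{l_1}$ — concretely, the MTP$_2$–preservation under restriction to a sublattice, which genuinely uses both that the bridge's precision has nonpositive off–diagonals and that $\{x>l_1\}$ is closed under coordinatewise $\vee$ and $\wedge$ — together with making the passage from finite grids to the path events rigorous. The Radon–Nikodym identification in the second step is routine Gaussian bookkeeping but should be done with care, in particular the mutual absolute continuity on $\mathcal G$, which fails on all of $\mathcal F_t$ since bridges with distinct endpoints have mutually singular laws.
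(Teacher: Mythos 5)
Your argument is correct in outline, but it takes a genuinely different route from Bramson's. The paper does not reproduce a proof here; it cites Proposition~6.3 of Bramson and notes only that it is "proved via a coupling argument." Bramson's approach constructs, by hand, a monotone coupling of the bridges from $(0,0)$ to $(t,y)$ and to $(t,y')$ conditioned on $B_{l_1}$, directly exhibiting pathwise domination. You instead derive the same stochastic domination from positive association: you identify the correct Radon--Nikodym derivative $c\,e^{\kappa B_{t-r}}$, $\kappa=(y'-y)/r>0$, on $\sigma(B_s:s\le t-r)$ (which is right, and you are right to stress that this a.c.\ holds only on that sub-$\sigma$-algebra), rewrite the monotonicity as nonnegativity of a covariance under the conditioned bridge law, and establish the needed FKG property via MTP$_2$ of the discretized Gaussian (nonpositive off-diagonal entries of the tridiagonal precision) together with stability of MTP$_2$ under restriction to the sublattice $\{x_j>l_1(s_j)\ \forall j\}$. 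All of these steps check out, and noting that Strassen's theorem then recovers the coupling is the honest way to explain the relation to Bramson. What each approach buys: yours is more systematic and generalizes readily (any log-supermodular reweighting, any sublattice constraint), at the cost of invoking the FKG/MTP$_2$ machinery and a limit passage; Bramson's is more elementary and constructive but is tailored to the bridge. The one place you wave your hands is the grid-to-path limit: the claim that $\{B_s>l_i(s)\ \forall s\in D\}\setminus B_{l_i}$ is $\P_{0,0}^{t,y}$-null needs an argument. For continuous $l_i$ the difference event sits inside $\{\exists s:\ B_s=l_i(s)\}\cap\{B\ge l_i\ \text{on }[r,t-r]\}$, which vanishes for nice curves; for merely usc $l_i$ with finitely many jumps one should, as you say, place the jump points on every grid and then argue separately near them. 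You acknowledge this, and for the curves that actually appear in the paper it is unproblematic, but in a complete write-up it is the one step that would need to be filled in.
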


The following are standard results (see, e.g., \cite{HarrisRoberts2017}) in the theory of branching processes, which enable one to compute the first and second moments by reducing the problem to a single-particle or a two-particle system, respectively.

\begin{lemma}[Many-to-one Lemma]
    For any $t\geq 0$ and any measurable function $f: C([0,t])  \rightarrow \mathbb{R^+}$,
\begin{equation*}
    \E\left[\sum_{u\in \mathcal{N}_t}f\bigl((X_u(r))_{0\le r\le t}\bigr)\right]= e^t \E\left[ f\bigl((B_r)_{0\le r\le t}\bigr)\right],
\end{equation*}
where $(B_r)_{r\ge 0}$ is a standard Brownian motion.
\end{lemma}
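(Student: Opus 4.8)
The plan is to disentangle the branching Brownian motion into two independent ingredients: the \emph{genealogical skeleton}---the abstract rooted binary tree together with the times of its branching events---and the family of Brownian displacements carried along the edges of that skeleton. Denote the skeleton by $\mathbb{T}$; it is generated by letting each particle split at rate one, independently, and its law does not involve the spatial positions at all. Conditionally on $\mathbb{T}$, assign to every edge an independent Brownian motion run over the length of that edge, and define $X_u(r)$ for $u\in\mathcal N_t$ and $r\le t$ by concatenating these increments along the ancestral line from the root down to $u$. With this construction $\mathbb{T}$ is independent of the edge-wise Brownian motions, and the process $(X_u(r))_{u\in\mathcal N_t,\,r\le t}$ has the law of the BBM described in the introduction.

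I would then record two elementary facts. First, $N_t:=|\mathcal N_t|$ is a Yule process (pure birth with rate $n$ in state $n$), and a standard computation---in time $dt$ each of the $N_t$ particles branches with probability $dt$, so $\tfrac{d}{dt}\E[N_t]=\E[N_t]$ with $\E[N_0]=1$---gives $\E[N_t]=e^t$; in particular $N_t<\infty$ almost surely, so $\sum_{u\in\mathcal N_t}f\big((X_u(r))_{r\le t}\big)$ is a finite sum of nonnegative terms. Second, fix a leaf $u$ of $\mathbb{T}$: conditionally on $\mathbb{T}$, the path $r\mapsto X_u(r)$ on $[0,t]$ is a concatenation of finitely many independent Brownian motions whose durations add up to $t$, hence, by the (strong) Markov property of Brownian motion, is itself a standard Brownian motion on $[0,t]$. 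Consequently, for every fixed leaf $u$,
\[
\E\big[\,f\big((X_u(r))_{r\le t}\big)\ \big|\ \mathbb{T}\,\big]=\E\big[\,f\big((B_r)_{0\le r\le t}\big)\,\big],
\]
which depends neither on $\mathbb{T}$ nor on the choice of $u$.

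To conclude I would apply Tonelli's theorem (legitimate since $f\ge 0$) together with the tower property, conditioning on $\mathbb{T}$ and summing over the $N_t$ leaves:
\[
\E\!\Big[\sum_{u\in\mathcal N_t}f\big((X_u(r))_{r\le t}\big)\Big]
=\E\!\Big[\sum_{u\in\mathcal N_t}\E\big[f\big((X_u(r))_{r\le t}\big)\mid\mathbb{T}\big]\Big]
=\E[N_t]\,\E\big[f\big((B_r)_{0\le r\le t}\big)\big]
=e^{t}\,\E\big[f\big((B_r)_{0\le r\le t}\big)\big].
\]

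The only genuinely delicate point is the first paragraph: one must construct a single probability space carrying the skeleton $\mathbb{T}$ and the edge-wise Brownian motions as independent objects and verify that the resulting particle system is a version of the BBM, in particular that concatenating the edge increments along an ancestral line reproduces the prescribed motion of a particle. This bookkeeping is routine and standard (see, e.g., \cite{HarrisRoberts2017}); everything afterwards is just linearity of expectation and the mean of a Yule process. One could instead argue by a renewal/induction argument on the first branching time---splitting according to whether the initial exponential clock rings before or after $t$ and using the branching property to replace the two descendant subtrees by independent BBMs---but the decomposition argument above is the cleanest, and is the one I would present.
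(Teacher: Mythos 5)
Your argument is correct. Note, however, that the paper does not supply its own proof of the Many-to-one Lemma---it simply records it as a standard result and cites Harris--Roberts \cite{HarrisRoberts2017}---so there is no in-text proof to compare against. The skeleton-plus-edge-displacement decomposition you use (independence of the Yule genealogy $\mathbb{T}$ from the edgewise Brownian increments, $\E[N_t]=e^t$, the observation that conditionally on $\mathbb{T}$ each ancestral path is a concatenation of independent Brownian increments and hence itself a standard Brownian motion on $[0,t]$, followed by Tonelli and the tower property) is one of the standard ways to establish the identity; Harris and Roberts instead derive it as the simplest case of a spine decomposition, i.e.\ a change of measure under which a single marked line of descent is run as a Brownian motion. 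Your route is more elementary and avoids the change-of-measure machinery, at the cost of needing the small construction you flag in the first paragraph (building a single probability space carrying both $\mathbb{T}$ and the edge increments and checking that concatenation along ancestral lines reproduces the BBM dynamics); the spine approach, by contrast, generalises immediately to the many-to-few identities used elsewhere in this paper (e.g.\ Lemma~\ref{lem:many-to-two-two-times}). Both are acceptable; your proof is complete as written.
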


\begin{lemma}[Many-to-two Lemma]\label{lem:many-to-two-two-times}
Let $f: C([0,t],\mathbb R)\to\mathbb R^+$ and $g: C([0,s],\mathbb R)\to\mathbb R^+$ be measurable, and assume $0\le s\le t$. Then
\begin{equation}
\begin{aligned}
&\E\!\left[ \Bigl(\sum_{u\in\mathcal N_t} f\bigl((X_u(r))_{0\le r\le t}\bigr)\Bigr)
               \Bigl(\sum_{v\in\mathcal N_s} g\bigl((X_v(r))_{0\le r\le s}\bigr)\Bigr) \right] \nonumber\\
&\qquad= e^{\,t}\, \E\!\left[ f\bigl((B_r)_{0\le r\le t}\bigr)\, g\bigl((B_r)_{0\le r\le s}\bigr) \right]\\
       &\qquad + \int_0^{s} 2\,e^{\,t+s-\gamma}\,
          \E\!\left[ f\bigl((B^{(1,\gamma)}_r)_{0\le r\le t}\bigr)\,
                     g\bigl((B^{(2,\gamma)}_r)_{0\le r\le s}\bigr) \right] \, \mathrm{d}\gamma,
\label{eq:two-time-many-to-two}
\end{aligned}
\end{equation}
where for each $\gamma\in[0,s]$ the pair 
$\bigl((B^{(1,\gamma)}_r)_{r\ge 0},(B^{(2,\gamma)}_r)_{r\ge 0}\bigr)$ are Brownian motions that \emph{coincide up to time $\gamma$} and evolve independently thereafter.
\end{lemma}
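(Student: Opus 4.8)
The plan is to expand the left-hand side as a double sum over particles and split it according to the genealogy of each pair. Writing the product of the two sums as $\E\big[\sum_{u\in\mathcal N_t}\sum_{v\in\mathcal N_s} f((X_u(r))_{0\le r\le t})\,g((X_v(r))_{0\le r\le s})\big]$ (everything here is nonnegative, so Tonelli licenses all the interchanges below), I would partition the pairs $(u,v)$ into two classes: those for which $v$ is the time-$s$ ancestor of $u$, i.e.\ $v\preceq u$, and those for which it is not. The first class will produce the diagonal term $e^{t}\E[f(B)g(B)]$ and the second the integral term.

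For the first class ($v\preceq u$), for each $u\in\mathcal N_t$ there is exactly one admissible $v$, namely $\mathrm{anc}_s(u)$, whose path is the restriction $(X_u(r))_{0\le r\le s}$. Hence this part of the sum equals $\E\big[\sum_{u\in\mathcal N_t} h((X_u(r))_{0\le r\le t})\big]$ with $h(\omega):=f(\omega)\,g(\omega|_{[0,s]})$, and the Many-to-one Lemma gives $e^{t}\,\E[f((B_r)_{0\le r\le t})\,g((B_r)_{0\le r\le s})]$, the first term on the right-hand side.

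For the second class ($v\not\preceq u$), the lineages of $u$ and $v$ have a most recent common ancestor particle $w$, which branches at some time $\gamma$ into two children $w_1,w_2$, one an ancestor of $u$ and the other an ancestor of $v$; note the inner sum over $v\in\mathcal N_s$ is empty unless $\gamma\le s$. I would therefore re-index this part of the double sum by summing over branching events $b$ of the BBM and, for each such event at time $\gamma(b)$ with children $w_1,w_2$, over $u\in\mathcal N_t$ with $w_1\preceq u$ and $v\in\mathcal N_s$ with $w_2\preceq v$, plus the symmetric term with $w_1,w_2$ swapped. The key step is a \emph{many-to-one principle for branching events}: since every particle branches at rate $1$, the compensation formula for the branching point process yields, for any nonnegative functional $\Psi$, $\E\big[\sum_{b}\Psi\big((X_{w(b)}(r))_{0\le r\le\gamma(b)},\gamma(b),\Xi_1(b),\Xi_2(b)\big)\big]=\int_0^\infty e^{\gamma}\,\E\big[\Psi\big((B_r)_{0\le r\le\gamma},\gamma,\Xi_1,\Xi_2\big)\big]\,d\gamma$, where on the right the ancestral path of the branching particle is a plain Brownian motion and $\Xi_1,\Xi_2$ are two conditionally independent BBMs emanating from its endpoint $B_\gamma$ (this is precisely where constancy of the branching rate enters, so that the spine is not size-biased). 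Applying the Many-to-one Lemma inside each of the two subtrees then contributes a factor $e^{t-\gamma}$ from $w_1$'s subtree evaluated at time $t$ and a factor $e^{s-\gamma}$ from $w_2$'s subtree at time $s$, and replaces the two subtree-lineages by Brownian continuations that are independent given the common ancestral path on $[0,\gamma]$ — i.e.\ by the pair $(B^{(1,\gamma)},B^{(2,\gamma)})$ described in the statement. Together with the factor $2$ from the $w_1\leftrightarrow w_2$ symmetry and the constraint $\gamma\le s$, this yields $\int_0^{s}2\,e^{\gamma}e^{t-\gamma}e^{s-\gamma}\,\E[f((B^{(1,\gamma)}_r)_{0\le r\le t})\,g((B^{(2,\gamma)}_r)_{0\le r\le s})]\,d\gamma$, which equals the second term. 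Adding the two classes gives the identity.

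The main obstacle is making the branching-event re-indexing rigorous — justifying the compensation-formula identity above, checking measurability of the path functionals, and controlling the Fubini/Tonelli interchanges — rather than the routine bookkeeping of the exponential factors. If one prefers to bypass this, the statement can alternatively be deduced from the many-to-few (multi-spine) lemma of Harris and Roberts \cite{HarrisRoberts2017} specialised to $k=2$: the genealogical split is the same, and the only new feature is that the two marginal times $s\le t$ differ, which affects only the range of the $\gamma$-integral and the exponents, exactly as above.
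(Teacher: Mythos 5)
Your argument is correct, and it is worth noting that the paper does not actually prove this lemma: it is presented as a standard fact with a citation to Harris--Roberts \cite{HarrisRoberts2017}, where it follows from the many-to-few (two-spine) formula --- precisely the alternative route you mention at the end. Your elementary derivation is the standard direct proof of the two-time second-moment formula: split the double sum over $(u,v)\in\mathcal N_t\times\mathcal N_s$ according to whether $v\preceq u$; the diagonal class collapses to a single sum over $\mathcal N_t$ with functional $h(\omega)=f(\omega)\,g(\omega|_{[0,s]})$, to which many-to-one gives the $e^t$ term; and the off-diagonal class is re-indexed over branching events of the tree, with the compensation formula producing $\int_0^\infty e^\gamma(\cdots)\,d\gamma$ (the $e^\gamma$ coming from an application of many-to-one to $\sum_{w\in\mathcal N_\gamma}$, reflecting that the branching intensity at time $\gamma$ is $|\mathcal N_\gamma|$), the branching property making the two post-split subtrees independent given the ancestral path on $[0,\gamma]$, two further applications of many-to-one contributing $e^{t-\gamma}$ and $e^{s-\gamma}$, the factor $2$ accounting for the two orderings of the children, and the requirement $v\in\mathcal N_s$ forcing the split time to satisfy $\gamma< s$. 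All the bookkeeping checks out. The only point to tighten in a written-up version is the one you flag yourself: the compensation/Mecke identity should first be stated for functionals of the spine path alone, and the branching property should then be invoked separately to replace the conditional law of the two subtrees by independent copies; compressing both into a single displayed identity, as you do, is correct but slightly opaque, and is exactly the step that Harris--Roberts package cleanly for general $k$.
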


\section{Proof of intermediate lemmas}\label{Sec:proofprop}

\begin{proof}[Proof of Lemma~\ref{lem:BK-cross}]
Set \(A:=\bigcup_{u\in U}A_u\) and \(E:=\bigcup_{u\in U}E_u\).
Fix \(\omega\in \bigcup_{u\neq v}(A_u\cap E_v)\). Then there exist \(u_0\neq v_0\) with
\(\omega\in A_{u_0}\cap E_{v_0}\).

By the independence assumption, each \(A_u\) depends only on the variables associated with index \(u\),
and likewise each \(E_u\) depends only on those of index \(u\).
Define the witness index sets \(I:=\{u_0\}\) for \(A\) and \(J:=\{v_0\}\) for \(E\).
Since \(\omega\in A_{u_0}\), fixing the coordinates of index \(u_0\) as in \(\omega\) forces
membership in \(A_{u_0}\subseteq A\); similarly, \(\omega\in E_{v_0}\) yields a witness for \(E\).
Because \(u_0\neq v_0\), the sets \(I\) and \(J\) are disjoint, so
\[
\bigcup_{u\neq v}(A_u\cap E_v)\;\subseteq\; A\circ E,
\]
where \(\circ\) denotes disjoint occurrence.

By the van den Berg--Kesten--Reimer inequality,
\(\mathbb{P}(A\circ E)\le \mathbb{P}(A)\,\mathbb{P}(E)\).
Combining with the inclusion gives
\[
\mathbb{P}\!\left(\bigcup_{u\neq v} A_u\cap E_v\right)
\le \mathbb{P}(A\circ E)
\le \mathbb{P}\!\left(\bigcup_{u} A_u\right)\,
   \mathbb{P}\!\left(\bigcup_{u} E_u\right),
\]
as claimed.
\end{proof}

\begin{proof}[Proof of Lemma \ref{subtoful}]
Fix $n$ and any $T\in[S_n,S_{n+1}]$. Let $y(t) = x(t) - M \ge 0$, and define its time average $\tilde{\rho}_T := \rho_T - M$. Since $y(t) \ge 0$, the integral $\int_0^T y(t)\,dt$ is non-decreasing, yielding
\[
\int_0^{S_n} y(t)\,dt \le \int_0^T y(t)\,dt \le \int_0^{S_{n+1}} y(t)\,dt.
\]
Dividing by $T$ and using $S_n \le T \le S_{n+1}$, we obtain
\[
\frac{S_n}{S_{n+1}}\,\tilde{\rho}_{S_n} \le \tilde{\rho}_T \le \frac{S_{n+1}}{S_n}\,\tilde{\rho}_{S_{n+1}}.
\]
As $n\to\infty$, we have $\tilde{\rho}_{S_n} \to \rho - M$ and $S_{n+1}/S_n \to 1$, so both the lower and upper bounds tend to $\rho - M$. Hence $\tilde{\rho}_T \to \rho - M$, which implies $\rho_T \to \rho$ as $T\to\infty$.
\end{proof}

Before proving Lemma \ref{gih}, we first recall the precise bound obtained in Theorem 2.2 of \cite{Arguin2016}.

\begin{lemma}[Theorem~2.2 in~\cite{Arguin2016}]\label{lemmarg}
Let \(0<\gamma<\tfrac12\) and \(y\in\mathbb{R}\). There exist constants \(C>0\) and \(r_0=r_0(\gamma,y)>0\) such that, for all \(r\ge r_0\) and \(t\ge 3r\),
\begin{equation}\label{jth}
\begin{split}
\mathbb{P}\!\Bigl(\exists\, s\in [r,t-r],\ \exists\, u\in \mathcal{N}_t:\ 
& X_u(s)>\tfrac{s}{t}\,m_t+\Gamma_{t,\gamma}(s)+y\Bigr) \\
&\le C\, r^{\gamma/2}\, e^{-\sqrt{2}\, r^{\gamma/2}}.
\end{split}
\end{equation}
\end{lemma}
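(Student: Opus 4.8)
The plan is to dominate the event of \eqref{jth} by one phrased purely in terms of the running maximum $M_s$, $s\in[r,t-r]$, and then invoke the classical right‑tail bound for $M_s$. A direct first‑moment (many‑to‑one) bound on the number of ancestral paths above the barrier does \emph{not} close: for times $s$ close to $t$ the interpolating line $\tfrac{s}{t}m_t$ lies only $O(\log t)$ below the critical line $\sqrt2\,s$, so the first moment overcounts by the logarithmic‑correction factor $t^{3/2}$, which no $t$‑independent factor can cancel. Routing through $M_s$ sidesteps this, since the tail of the maximum already incorporates that correction.

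The first ingredient I would establish is an elementary inequality for the barrier: there is $r_0=r_0(\gamma,y)$ such that for all $r\ge r_0$, $t\ge 3r$ and $s\in[r,t-r]$,
\[
\tfrac{s}{t}m_t+\Gamma_{t,\gamma}(s)+y\ \ge\ m_s+\tfrac12\,\Gamma_{t,\gamma}(s),\qquad m_s:=\sqrt2\,s-\tfrac{3}{2\sqrt2}\log s .
\]
Indeed $\tfrac{s}{t}m_t-m_s=\tfrac{3}{2\sqrt2}\bigl(\log s-\tfrac{s}{t}\log t\bigr)$, and since $s\mapsto\log s-\tfrac{s}{t}\log t$ is concave its minimum over $[r,t-r]$ is attained at an endpoint; using $r\le t/3$ (and that $(\log t)/t$ is decreasing) one gets $\tfrac{s}{t}m_t-m_s\ge\tfrac{3}{2\sqrt2}\log\tfrac23$. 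The claimed inequality thus reduces to $\tfrac12\Gamma_{t,\gamma}(s)\ge\tfrac{3}{2\sqrt2}\log\tfrac32+|y|$, which holds because $\Gamma_{t,\gamma}(s)=(s\wedge(t-s))^\gamma\ge r^\gamma$ and $r\ge r_0$. Since $X_u(s)\le M_s$ for all $u\in\mathcal N_t$ and $s\le t$, the event in \eqref{jth} is therefore contained in $\bigl\{\exists\,s\in[r,t-r]:\ M_s-m_s\ge\tfrac12\Gamma_{t,\gamma}(s)\bigr\}$.

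Next I would discretize. Covering $[r,t-r]$ by unit intervals $[k,k+1]$ and setting $\gamma_k:=\inf_{s\in[k,k+1]}\Gamma_{t,\gamma}(s)$ (so $\gamma_k\asymp(k\wedge(t-k))^\gamma$ and $\gamma_k\ge r^\gamma$), the fact that $m_s$ is nondecreasing shows the event lies in $\bigcup_k\bigl\{\sup_{s\in[k,k+1]}M_s\ge m_k+\tfrac12\gamma_k\bigr\}$. To pass from the continuous supremum to a single time I would use that $s\mapsto M_s$ is a.s.\ continuous (branching introduces no jumps in particle positions): if $\sigma\in[k,k+1]$ is the first time $M_s$ reaches $b:=m_k+\tfrac12\gamma_k$, then following one fixed descendant of the time‑$\sigma$ leader up to time $k+1$ — a path whose displacement over $[\sigma,k+1]$ is centred Gaussian of variance $\le1$ and independent of $\mathcal F_\sigma$ — yields, for any $G_k>0$,
\[
\mathbb P\Bigl(\sup_{s\in[k,k+1]}M_s\ge b\Bigr)\ \le\ \mathbb P\bigl(M_{k+1}\ge b-G_k\bigr)+e^{-G_k^2/2}.
\]
Taking $G_k=\tfrac14\gamma_k$, rewriting the first term as $\mathbb P\bigl(M_{k+1}-m_{k+1}\ge\tfrac14\gamma_k-\sqrt2\bigr)\le\mathbb P\bigl(M_{k+1}-m_{k+1}\ge\tfrac18\gamma_k\bigr)$ (valid once $\gamma_k\ge8\sqrt2$, i.e.\ $r\ge r_0$), and applying the standard uniform‑in‑$\tau$ right‑tail estimate $\mathbb P(M_\tau-m_\tau\ge z)\le C z\,e^{-\sqrt2 z}$, each summand is at most $C'\gamma_k\,e^{-c\gamma_k}$ for absolute constants $C',c>0$.

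Finally, summing over $k$ and splitting at $t/2$, the contributions form (twice) a tail $\sum_{i\ge r}i^\gamma e^{-c i^\gamma}$, which by the substitution $u=i^\gamma$ is of order $r\,e^{-c r^\gamma}$; and $r\,e^{-c r^\gamma}\le C\,r^{\gamma/2}e^{-\sqrt2\,r^{\gamma/2}}$ for $r$ large, since $r^\gamma$ dominates $r^{\gamma/2}$. This yields \eqref{jth}. The two steps I expect to require real care are the barrier inequality — the $\log$ bookkeeping, and in particular that $\log s-\tfrac{s}{t}\log t$ may be mildly negative near $s=t-r$, which is absorbed by $\Gamma_{t,\gamma}(s)\ge r^\gamma$ — and the continuous‑to‑discrete reduction for $\sup_{[k,k+1]}M_s$; everything else is routine.
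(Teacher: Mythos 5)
Your proof is correct and takes a genuinely different route from the paper. The paper's argument is essentially a citation: it states that the bound follows by summing Eqs.~(5.8) and (5.18) in the proof of Theorem~2.2 of Arguin--Bovier--Kistler \cite{Arguin2016}, which is a many-to-one, entropic-repulsion barrier computation for the ancestral path. Your approach instead dominates the event entirely by the running maximum $M_s$: you observe that the barrier $\tfrac{s}{t}m_t + \Gamma_{t,\gamma}(s) + y$ lies uniformly at least $\tfrac12\Gamma_{t,\gamma}(s)$ above $m_s$ once $r\ge r_0$ (the concavity of $s \mapsto \log s - \tfrac{s}{t}\log t$ handles the logarithmic bookkeeping and the mildly negative endpoint value at $s=t-r$ is absorbed by $\Gamma_{t,\gamma}(s)\ge r^\gamma$), a unit-interval discretization together with a strong-Markov estimate reduces $\sup_{s\in[k,k+1]} M_s$ to a single $M_{k+1}$, and Bramson's right-tail bound $\mathbb{P}(M_\tau - m_\tau \ge z) \le C(1+z)^2 e^{-\sqrt{2}z}$ (applicable since $\gamma_k$ is of order $(k\wedge(t-k))^\gamma \le \sqrt{k+1}$ because $\gamma<1/2$) closes the sum. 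This in fact yields a bound of order $r\,e^{-c r^\gamma}$, which is stronger than the stated $r^{\gamma/2}e^{-\sqrt{2}r^{\gamma/2}}$ and implies it for $r$ large. Your route is shorter and self-contained, needing only the tail of the maximum and elementary stopping-time reasoning, whereas the cited Arguin et al.\ barrier machinery is heavier but is in any case re-used elsewhere in the paper (inside Lemma~\ref{gih}). Your opening remark that a naive many-to-one bound overcounts by a $t$-dependent factor near $s=t$ correctly identifies the obstruction that routing through $M_s$ sidesteps. Two minor cosmetic points: Bramson's tail carries a $(1+z)^2$ prefactor rather than $z$, and $\gamma_k \ge r^\gamma$ holds only up to a harmless constant at the endpoint intervals; neither affects the argument.
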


\begin{proof}
Inequality~\eqref{jth} follows directly from the proof of Theorem~2.2 in~\cite{Arguin2016}: 
summing the estimates provided by Eqs.~(5.8) and~(5.18) there yields the stated right-hand side.
\end{proof}
Denote by \( p_t(x) = (2\pi t)^{-1/2} e^{-x^2/(2t)} \) the Gaussian density with mean \(0\) and variance \(t\).
We now turn to the proof of Lemma~\ref{gih}.
\begin{proof}[Proof of Lemma \ref{gih}] 
Let $0<\alpha<1/2$ and $\delta>0$. For $R\geq r\geq 0$, denote $I_r=[r,t-r]$, $I_R=[R,t-R]$. Choose $\gamma\in(0,\alpha)$. By Lemma \ref{lemmarg} (with $y=0$), there exists $C_0$ and $r_0$ such that for $r \geq r_0$ and $t \ge 3r$,
\begin{equation}\label{do1}
    \P\left(\exists\, s\in [r,t-r],\ \exists\, u\in \mathcal{N}_t:\ X_u(s)>\tfrac{s}{t}\,m_t+\Gamma_{t,\gamma}(s)\right)< C_0\, r^{\gamma/2}\, e^{-\sqrt{2}\, r^{\gamma/2}}.
\end{equation}
Moreover, Bramson, in Proposition~3 of his seminal work~\cite{Bramsondep}, derived the following upper bound on the right tail of the maximal displacement. There exists a constant $C_1>0$ such that, for all $t\ge0$ and $0\le y\le \sqrt{t}$,
\[
  \mathbb{P}\!\left(M_t \ge m_t + y\right)
  \;\le\;
  C_1\, (y+1)^2\, e^{-\sqrt{2}\,y}.
\]
Therefore, for all $r \geq 1$ and $t \geq 3r$,
\begin{equation}\label{gr5f}
  \mathbb{P}\!\left(\exists  u\in \mathcal{N}_t : X_u(t) \ge m_t + r^{\gamma/2}\right)
  \;\le\;
  C_1\, (r^{\gamma/2}+1)^2\, e^{-\sqrt{2}\,r^{\gamma/2}}.
\end{equation}

We are going to prove that, for any $\lambda>0$ and all sufficiently large $r,R$ and $t\ge 3r$,
\begin{equation}
\begin{split}\label{do2}
    \P\Bigl(&\exists u \in \mathcal{N}_t: X_u({t})-m_t \in \left[{x-\rho \log R},r^{\gamma/2}\right],\,\forall s \in I_r:\
    X_u(s) \leq \tfrac{s}{t}m_t+\Gamma_{t,\gamma}(s), \\
    & \exists s \in I_R: X_u(s) > \tfrac{s}{t}m_t-\Gamma_{t,\alpha}(s)\Bigr)
    \leq C {(\sqrt{r}+\rho \log R)^2} R^{\alpha +\sqrt{2}\rho -1/2 +\lambda},
\end{split}
\end{equation}
where $C$ is a constant {depending on $x$ and $\rho$}. Choosing $r = R^\epsilon$ for some {$\epsilon\in(0,\delta/2)$ and $\lambda\in (0,\delta/2)$}, it follows that for sufficiently large $R$,
\begin{equation}
    \text{\eqref{do1}} + \text{\eqref{gr5f}} + \text{\eqref{do2}} \;\leq\; R^{\alpha -1/2 +\sqrt{2}\rho +\delta},
\end{equation}
uniformly in $t$. This proves inequality~\eqref{ojer}.

It remains to establish the bound \eqref{do2}. To this end, we apply Markov’s inequality and the many-to-one Lemma, which yield that the probability in \eqref{do2} is bounded by
\begin{equation}\label{gfdgd}
\begin{split}
         e^t&\P\Bigl(B_t-m_t \in \left[{x-\rho \log R},r^{\gamma/2}\right],\,\forall s \in I_r:\
    B_s \leq \tfrac{s}{t}m_t+\Gamma_{t,\gamma}(s), \\
    & \qquad \exists s \in I_R: B_s > \tfrac{s}{t}m_t-\Gamma_{t,\alpha}(s)  \Bigr)\\& =e^t  \int_{-r^{\gamma/2}}^{{\rho \log R-x}} p_t(m_t -z)
           \P_{0,0}^{t,z}(\exists s \in I_R  :  B_s <  \Gamma_{t,\alpha}  \text{ and } \forall s\in I_r: B_s \geq - \Gamma_{t,\gamma})\mathop{}\!\mathrm{d}z.
\end{split}
\end{equation}

 Fix $z\in[-r^{\gamma/2},{\rho \log R-x}]$. The probability inside the integral in \eqref{gfdgd} can be factorised as
    \begin{align}\label{2termadbound}
       &\P_{0,0}^{t,z}(\exists s \in I_R \, , \, B_s <  \Gamma_{t,\alpha}   \cap \forall s\in I_r, B_s \geq - \Gamma_{t,\gamma})\\
        &\quad =\P_{0,0}^{t,z}( \forall s\in I_r, B_s \geq - \Gamma_{t,\gamma})\P_{0,0}^{t,z}\left(\exists s \in I_R \, , \, B_s<\Gamma_{t,\alpha} \mid \forall s\in I_r, B_s \geq - \Gamma_{t,\gamma}\right).\notag 
    \end{align}
We deal with each of the two factors in turn. By monotonicity and Lemma \ref{34Arguin}, for $r$ large enough and all $t\geq 3r$,
    \begin{equation}
        \P_{0,0}^{t,z} \left( \forall s \in I_r,\ B_s \geq 0 \right)\leq 2 \frac{\left( \sqrt{r} + {(\rho \log R-x)^+} \right)\left( \sqrt{r} + r \tfrac{{(\rho \log R-x)^+}}{t} \right)}{t - 2r}\leq \frac{{C(\sqrt{r}+\rho \log R)^2}}{t}.
    \end{equation}
Moreover, by Lemmas \ref{prop63bramson} and \ref{prop61bramson}, for $r$ large enough and all $t\geq 3r$,
    \begin{equation}
        \begin{split}
            \P_{0,0}^{t,z}& \left( \forall s \in I_r,\ B_s \geq 0 \mid \forall s \in I_r,\ B_s \geq -\Gamma_{t,\gamma} \right)^{-1}\\
            &\leq \P_{0,0}^{t,-r^{\gamma/2}} \left( \forall s \in I_r,\ B_s \geq 0 \mid \forall s \in I_r,\ B_s \geq -\Gamma_{t,\gamma} \right)^{-1}\leq 2.
        \end{split}
    \end{equation}
  
Therefore, 
    \begin{equation}
    \begin{split}
        \P_{0,0}^{t,z} &\left( \forall s \in I_r,\ B_s \geq -\Gamma_{t,\gamma} \right) 
        \\ &= \P_{0,0}^{t,z} \left( \forall s \in I_r,\ B_s \geq 0 \right)
         \P_{0,0}^{t,z} \left( \forall s \in I_r,\ B_s \geq 0 \mid \forall s \in I_r,\ B_s \geq -\Gamma_{t,\gamma} \right)^{-1}
        \\& \leq \frac{{C(\sqrt{r}+\rho \log R)^2}}{t} \quad \text{for } r \text{ large enough and all $t\geq 3r$.}
    \end{split}   
\end{equation}
We now bound the second term in \eqref{2termadbound}. For any $\lambda>0$,
    \begin{equation}
      \begin{aligned}
        \P_{0,0}^{t,z}&(\exists s \in I_R : \, B_s < \Gamma_{t,\alpha}  \mid \forall s\in I_r: B_s \geq - \Gamma_{t,\gamma}) & &  \\
               & \leq \P_{0,0}^{t,z}(\exists s \in I_R : \, B_s <  \Gamma_{t,\alpha} \mid \forall s\in I_R: B_s \geq - \Gamma_{t,\alpha}) & & \text{\footnotesize(Lemma \ref{monoticitypath})}\\
                & = 1-\P_{0,0}^{t,z}(\forall s \in I_R: \, B_s \geq  \Gamma_{t,\alpha} \mid \forall s\in I_R: B_s \geq - \Gamma_{t,\alpha}) & & \\
                 & \leq 1-\P_{0,0}^{t,-r^{\gamma/2}}(\forall s \in I_R : \, B_s \geq  \Gamma_{t,\alpha} \mid \forall s\in I_R: B_s \geq - \Gamma_{t,\alpha}) & & \text{\footnotesize(Lemma \ref{prop63bramson})}\\
                 & \leq R^{\alpha-1/2 +\lambda} \qquad \text{for $R$ large enough.} & & \text{\footnotesize(Lemma \ref{prop61bramson})}
      \end{aligned}
    \end{equation}
    
Therefore, for any $\lambda>0$, for $r$ and $R$ large enough, \eqref{2termadbound} is bounded by
    \begin{equation}
        \frac{{C(\sqrt{r}+\rho \log R)^2}}{t}R^{\alpha-1/2 +\lambda}.
    \end{equation}

Thus, for $r$ and $R$ large enough and $t\geq 3r$, equation \eqref{gfdgd} is smaller than
    \begin{equation}\label{48}
        \begin{split}
           {C(\sqrt{r}+\rho \log R)^2} R^{\alpha-1/2 +\lambda}\int_{-r^{\gamma/2}}^{{\rho \log R-x}} e^{\frac{m_t}{t} z -\frac{z^2}{2t}} \mathop{}\!\mathrm{d}z\leq {C(\sqrt{r}+\rho \log R)^2}R^{\alpha-1/2 +\lambda +\sqrt{2}\rho},
    \end{split}
    \end{equation}
for some constant $C$. This completes the proof of \eqref{do2}.
\end{proof}

The following result will directly imply Lemma \ref{lemma-earlybrloc} by Markov’s inequality.

\begin{lemma}
Let $0<\alpha<1/2${, $x\in\mathbb{R}$,} and {$\rho\geq 0$}. There exist constants $C=C({x,}\rho,\alpha)$ and $r_0$ such that, for all $r\ge r_0$, $R\geq2r$, $s\ge3r$ and $t\ge s+2r$,
\begin{equation}\label{eq:path-localization-br-E}
\begin{split}
\E\!\Bigl[&\#\Bigl\{(u,v)\in\mathcal N_s({x-\rho \log r})\times\mathcal N_t({x-\rho \log r})\colon
u \text{ is $\alpha$--$s$--localised on }[r,s-r], \\
&\quad v \text{ is $\alpha$--$t$--localised on }[r,t-r],
\text{ and } Q(u,v)\ge R\Bigr\}\Bigr] \\
&
\;\le\; Cr^{2\sqrt2\rho+\frac12}\left(r e^{-\sqrt{2}R^\alpha}+(t-s)e^{-\sqrt{2}(t-s)^\alpha}\right).
\end{split}
\end{equation}
\end{lemma}
\begin{proof}
Throughout the proof, we fix \(0<\alpha<1/2\){, \(x\in\mathbb{R}\),} and \({\rho\ge0}\), and let \(r, R, s, t\) be parameters satisfying \(r>0\), \(R\ge2r\), \(s\ge3r\), and \(t\ge s+2r\); we will eventually let \(r\to\infty\).
By Lemma \ref{lem:many-to-two-two-times}, the expectation in \eqref{eq:path-localization-br-E} equals
\begin{equation}\label{fdfo}
\begin{aligned}
&2\int_{R}^{s} e^{s+t-\gamma}\;
\P\!\left(
\begin{aligned}
&B^{1,\gamma}_s \ge m_s{+x}-\rho \log r,\ \forall \phi\in [r,s-r],\ B^{1,\gamma}_\phi \le \tfrac{m_s}{s}\phi-\Gamma_{s,\alpha}(\phi),\\
&B^{2,\gamma}_t \ge m_t{+x}-\rho \log r,\ \forall \phi\in [r,t-r],\ B^{2,\gamma}_\phi \le \tfrac{m_t}{t}\phi-\Gamma_{t,\alpha}(\phi)
\end{aligned}
\right)\,d\gamma \\
&\qquad +\ e^{t}\;\P\!\left(
\begin{aligned}
&B_s\ge m_s{+x}-\rho \log r,\ \forall \phi\in[r,s-r],\ B_\phi \le \tfrac{m_s}{s}\phi-\Gamma_{s,\alpha}(\phi),\\
&B_t\ge m_t{+x}-\rho \log r,\ \forall \phi\in[r,t-r],\ B_\phi \le \tfrac{m_t}{t}\phi-\Gamma_{t,\alpha}(\phi)
\end{aligned}
\right),
\end{aligned}
\end{equation}
where for $\gamma \in \left[0,t\right]$, $(B^{(1,\gamma)}_\phi)_{\phi\in \mathbb{R}}$ and $(B^{(2,\gamma)}_\phi)_{\phi\in \mathbb{R}}$ are coupled Brownian motions sharing a common path until time $\gamma$, then evolving independently thereafter.
We first show that, in our parameter regime, the second term in \eqref{fdfo} vanishes when \(r\) is sufficiently large. 
We then bound the contribution of the integral term in \eqref{fdfo}.

\medskip
\noindent\textbf{Bound for the common ancestor term.}
The second contribution in \eqref{fdfo} equals
\begin{equation}\label{eq:anc-def}
e^{t}\,\P\!\left(
\begin{aligned}
&B_s\ge m_s{+x}-\rho \log r,\ \forall \phi\in[r,s-r]:\ B_\phi \le \tfrac{m_s}{s}\phi-\Gamma_{s,\alpha}(\phi),\\
&B_t\ge m_t{+x}-\rho \log r,\ \forall \phi\in[r,t-r]:\ B_\phi \le \tfrac{m_t}{t}\phi-\Gamma_{t,\alpha}(\phi)
\end{aligned}
\right).
\end{equation}
Note that the constraint at $\phi=s$ enforces
\begin{equation}\label{condit}
B_s \le \tfrac{m_t}{t}\,s - \Gamma_{t,\alpha}(s), \text{ and that }B_s\ge m_s{+x}-\rho \log r.
\end{equation}
Thus, if
\begin{equation}\label{eq:anc-vanish}
\Gamma_{t,\alpha}(s) > \tfrac{3}{2\sqrt2}\bigl(\log s-\tfrac{s}{t}\log t\bigr)+\rho \log r {-x},
\end{equation}
the conditions \eqref{condit} are incompatible, and therefore \eqref{eq:anc-def} vanishes. We now establish \eqref{eq:anc-vanish}. If $s\leq t/2$, \eqref{eq:anc-vanish} follows from $\Gamma_{t,\alpha}(s)=s^\alpha \gg \log(s) \vee \log(r)$ for $s$ large enough. 
Observe that for $r$ large enough, we have
\begin{equation}\label{eq_log_ts}
    \log t \leq \tfrac{s}{t}\log t + \log(t - s).
\end{equation}
Therefore, for $r$ large enough, when $s\ge t/2$,
\begin{equation*}
    \Gamma_{t,\alpha}(s)= (t-s)^\alpha\gg 10\log(t-s)\gg \tfrac{3}{2\sqrt{2}}\left( \log s -\tfrac{s}{t}\log t\right) +\rho \log r {-x}.
\end{equation*}
Thus, for $r$ large enough and any $s\geq 3r$, and $t\geq s+2r$, \eqref{eq:anc-vanish} is established.

\medskip
\noindent\textbf{Bound for the split term.}
For the integral contribution in \eqref{fdfo}, conditioning on the common position at time $\gamma$ and using the Markov property yields the bound
\begin{equation}\label{eq:split-term-pre}
\begin{aligned}
&2\int_{R}^{s} e^{s+t-\gamma}
\int_{-\infty}^{\tfrac{m_t}{t}\gamma-\Gamma_{t,\alpha}(\gamma)}
p_\gamma(y)\;
\P\!\left(\forall \phi\in[r,\gamma],\;
B_\phi \le \tfrac{m_t}{t}\phi-\Gamma_{t,\alpha}(\phi)\,\bigm|\,B_\gamma=y\right)\\
&\qquad\times
\P\!\left(B_{s-\gamma}\ge m_s{+x}-\rho \log r,\;
\forall \phi\in[\gamma,s-r],\;
B_{\phi-\gamma}\le \tfrac{m_s}{s}\phi-\Gamma_{s,\alpha}(\phi)\,\bigm|\,B_0=y\right)\\
&\qquad\times
\P\!\left(B_{t-\gamma}\ge m_t{+x}-\rho \log r,\;
\forall \phi\in[\gamma,t-r],\;
B_{\phi-\gamma}\le \tfrac{m_t}{t}\phi-\Gamma_{t,\alpha}(\phi)\,\bigm|\,B_0=y\right)
\,\mathrm dy\,\mathrm d\gamma,
\end{aligned}
\end{equation}
After a standard re-centring of the path, this can be rewritten in the equivalent form
\begin{equation}\label{eq:split-term}
\begin{aligned}
&2\int_{R}^{s} e^{s+t-\gamma}
\int_{\tfrac{3}{2\sqrt{2}}\tfrac{\log t}{t}\gamma + \Gamma_{t,\alpha}(\gamma)}^{\infty}
p_\gamma(\sqrt2\gamma-y)\\
&\qquad\times
\P\!\left(\forall \phi\in[r,\gamma],\;
B_\phi \ge \tfrac{3}{2\sqrt{2}}\tfrac{\log t}{t}\phi + \Gamma_{t,\alpha}(\phi)\,\bigm|\,B_\gamma=y\right)\\
&\qquad\times
\P_{\sqrt2\gamma-y}\!\left(B_{s-\gamma}\ge m_s{+x}-\rho \log r,\;
\forall \phi\in[\gamma,s-r],\;
B_{\phi-\gamma}\le \tfrac{m_s}{s}\phi-\Gamma_{s,\alpha}(\phi)\right)\\
&\qquad\times
\P_{\sqrt2\gamma-y}\!\left(B_{t-\gamma}\ge m_t{+x}-\rho \log r,\;
\forall \phi\in[\gamma,t-r],\;
B_{\phi-\gamma}\le \tfrac{m_t}{t}\phi-\Gamma_{t,\alpha}(\phi)\right)
\,\mathrm dy\,\mathrm d\gamma.
\end{aligned}
\end{equation}

We bound each term of the last integral in equation \eqref{eq:split-term} separately. Let $\gamma \in [R,s]$ and $y\geq \tfrac{3}{2\sqrt2}\tfrac{\log t}{t}\gamma+\Gamma_{t,\alpha}(\gamma)
$. For the first term, we have that,

\begin{equation}
    p_\gamma\left(\sqrt{2}\gamma-y\right)= \frac{1}{\sqrt{2 \pi \gamma}}e^{-\gamma +\sqrt{2}y- \frac{y^2}{2\gamma}} \leq \frac{1}{\sqrt{2 \pi \gamma}}e^{-\gamma +\sqrt{2}y}.
\end{equation}

For the second term, let
\[
a(\phi)=\frac{3}{2\sqrt{2}}\frac{\log t}{t}\,\phi+\Gamma_{t,\alpha}(\phi),\qquad 0\le \phi\le\gamma,
\]
which is concave with \(a(0)=0\). Hence, for \(0<r<\gamma\),
\[
\{\forall \phi\in[r,\gamma]: B_\phi\ge a(\phi)\}
\subseteq
\Big\{\forall \phi\in[r,\gamma]: B_\phi\ge \tfrac{\phi}{\gamma}a(\gamma)\Big\}.
\]
Define the detrended bridge \(\widetilde B_\phi:=B_\phi-\tfrac{\phi}{\gamma}a(\gamma)\).
Conditioning on \(B_\gamma=y\), \(\widetilde B\) is a Brownian bridge from \(0\) to \(y-a(\gamma)\) on \([0,\gamma]\).
By Lemma~\ref{reflectionprinciple0},
\[
\P\!\big(\forall \phi\in[r,\gamma]: B_\phi\ge0\big)
=1-2\,\P(\widetilde B_r<0)
\le \sqrt{\tfrac{2}{\pi}}\;\frac{\mu_+}{\sigma},
\]
with \(\mu=\tfrac{r}{\gamma}\big(y-a(\gamma)\big)\) and \(\sigma^2=\tfrac{r(\gamma-r)}{\gamma}\).
Therefore,
\[
\P\!\left(\forall \phi\in[r,\gamma]\ B_\phi\ge a(\phi)\ \Big|\ B_\gamma=y\right)
\le \sqrt{\tfrac{2}{\pi}}\;\frac{\sqrt r}{\sqrt{\gamma(\gamma-r)}}\,
\Big(y-\tfrac{3}{2\sqrt{2}}\tfrac{\log t}{t}\gamma-\Gamma_{t,\alpha}(\gamma)\Big)_+.
\]

For the third term, by monotonicity,
\begin{align}
   &\P_{\sqrt{2}\gamma-y}\!\left(
      B_{s-\gamma} \ge m_s {+x}-\rho \log r, \;
      \forall \phi \in [\gamma,s-r], \;
      B_{\phi-\gamma} \leq \tfrac{m_s}{s}\phi-\Gamma_{s,\alpha}(\phi)
   \right) \nonumber \\
   &\quad\leq \P_{\sqrt{2}\gamma-y}\!\left( B_{s-\gamma} \ge m_s {+x}-\rho \log r\right)\,
   \P_{\gamma,\sqrt{2}\gamma-y}^{s,m_s{+x}-\rho \log r}\!\left(
      \forall \phi \in [\gamma,s-r], \;
      B_{\phi} \leq \tfrac{m_s}{s}\phi-\Gamma_{s,\alpha}(\phi)
   \right).
\end{align}
Observe that for $\gamma\in [r,s]$, the function 
\begin{equation}
    \gamma \longmapsto  \Gamma_{t,\alpha}(\gamma) -\gamma\tfrac{3}{2\sqrt{2}}\left(\tfrac{\log s}{s}-\tfrac{\log t}{t}\right) +x -\rho \log r 
\end{equation}
is concave in $\gamma$, therefore its minimum on $[r,s]$ is attained at $r$ or $s$. On one hand we established \eqref{eq:anc-vanish}. On the other hand, since $r^\alpha \gg \log r$  and $\tfrac{\log s}{s}\le \tfrac{\log r}{r}$, we obtain
\begin{equation}
    \Gamma_{t,\alpha}(r) -r\tfrac{3}{2\sqrt{2}}\left(\tfrac{\log s}{s}-\tfrac{\log t}{t}\right) +x -\rho \log r \gg 1.
\end{equation}
Therefore, for $y\ge \tfrac{3}{2\sqrt{2}}\tfrac{\log t}{t}\gamma + \Gamma_{t,\alpha}(\gamma)$ and $r$ sufficiently large,
\begin{equation}
    \sqrt{2}(s-\gamma) - \tfrac{3}{2\sqrt{2}}\log s + y +x-\rho \log r\ge \sqrt{2}(s-\gamma)- \tfrac{3}{2\sqrt{2}} (s-\gamma)\tfrac{\log s}{s}\ge s-\gamma.
\end{equation}
As a consequence,
\begin{align}
   \P_{\sqrt{2}\gamma-y}\!\left( B_{s-\gamma} \ge m_s {+x}-\rho \log r\right)
   &= \frac{1}{\sqrt{2\pi(s-\gamma)}}
      \int_{{x}-\rho \log r}^{\infty}
      \exp\!\left\{
        -\tfrac{\big(\sqrt{2}(s-\gamma) - \tfrac{3}{2\sqrt{2}}\log s + y + z\big)^2}{2(s-\gamma)}
      \right\}\,\mathrm{d}z \nonumber \\
   &\leq \frac{1}{\sqrt{s-\gamma}}
      \,r^{\sqrt{2}\rho}\,{e^{-\sqrt{2}x}}\,e^{\gamma-s-\sqrt{2}y}\,s^{3/2}.
\end{align}

Moreover,
\begin{equation}\label{eq:bridge-concave} 
\begin{aligned}
   \P&_{\gamma,\sqrt{2}\gamma-y}^{s,m_s{+x}-\rho \log r}\!\left( 
      \forall \phi \in [\gamma,s-r],\;
      B_{\phi} \le \tfrac{m_s}{s}\phi-\Gamma_{s,\alpha}(\phi)
   \right)\\ 
   &= \P_{\gamma,\tfrac{3}{2\sqrt{2}}\tfrac{\log s}{s}\gamma + \Gamma_{s,\alpha}(\gamma)-y}^{s,{x}-\rho \log r}\!\left(
      \forall \phi \in [\gamma,s-r],\;
      B_{\phi} \le \Gamma_{s,\alpha}(\gamma)\tfrac{s-\phi}{s-\gamma}-\Gamma_{s,\alpha}(\phi)
   \right) \\
   &\le \P_{\gamma,\tfrac{3}{2\sqrt{2}}\tfrac{\log s}{s}\gamma + \Gamma_{s,\alpha}(\gamma)-y}^{s,{x}-\rho \log r}\!\left(
      \forall \phi \in [\gamma,s-r],\;
      B_{\phi} \le 0
   \right).
\end{aligned}
\end{equation}

The last inequality follows from the concavity of $\Gamma_{s,\alpha}$ and that $\Gamma_{s,\alpha}(s)=0$.
Applying Lemma~\ref{34Arguin} when $s-\gamma\geq2r$ and $r>(\rho \log r)^2+1$ then gives the upper bound
\begin{equation}\label{eq:concave-arguin-bound}
  \frac{12\,\sqrt{r}}{\,s-\gamma\,}\,
  \Big(y-\tfrac{3}{2\sqrt{2}}\tfrac{\log s}{s}\gamma - \Gamma_{s,\alpha}(\gamma)\Big)_+.
\end{equation}

Identically, for the fourth term we obtain
\begin{align}
   &\P_{\sqrt{2}\gamma-y}\!\left(
      B_{t-\gamma} \ge m_t {+x}- \rho \log r, \;
      \forall \phi \in [\gamma,t-r], \;
      B_{\phi-\gamma} \leq \tfrac{m_t}{t}\phi - \Gamma_{t,\alpha}(\phi)
   \right) \nonumber \\
   &\quad\leq \P_{\sqrt{2}\gamma-y}\!\left( B_{t-\gamma} \ge m_t {+x}-\rho \log r\right)\,
   \P_{\gamma,\sqrt{2}\gamma-y}^{\,t,m_t{+x}-\rho \log r}\!\left(
      \forall \phi \in [\gamma,t-r], \;
      B_\phi \leq \tfrac{m_t}{t}\phi - \Gamma_{t,\alpha}(\phi)
   \right).
\end{align}
Furthermore,
\[
\P_{\sqrt{2}\gamma-y}\!\left( B_{t-\gamma} \ge m_t {+x}-\rho \log r\right) 
\;\leq\; \frac{1}{\sqrt{t-\gamma}} \,
   r^{\sqrt{2}\rho}\, {e^{-\sqrt{2}x}}\,e^{\gamma-t-\sqrt{2}y}\, t^{3/2},
\]
and, using Lemma~\ref{34Arguin} when $t-\gamma\geq 2r$ and $r>(\rho \log r -x)^2+1$, we also have
\begin{equation}
    \P_{\gamma,\sqrt{2}\gamma-y}^{t,m_t{+x}-\rho \log r}\!\left(
      \forall \phi \in [\gamma,t-r], \;
      B_{\phi} \leq \tfrac{m_t}{t}\phi-\Gamma_{t,\alpha}(\phi)
   \right)\leq \frac{12\,\sqrt{r}}{\,t-\gamma\,}\,
  \Big(y-\tfrac{3}{2\sqrt{2}}\tfrac{\log t}{t}\gamma - \Gamma_{t,\alpha}(\gamma)\Big)_+.
\end{equation}

We partition the domain of integration of $\gamma$ in \eqref{eq:split-term-pre} into two subdomains and insert the estimates above. We first bound the contribution to \eqref{eq:split-term-pre} coming from the region $\gamma \le s-2r$. There exist constants $C$ and $r_0 = r_0({x,}\alpha, \rho)$ such that, for all $r \ge r_0$,
\begin{equation}\label{obu4}
    \begin{aligned}
&\int_{\gamma \le s-2r} (\cdots)\, d\gamma \le r^{2\sqrt{2}\rho}\int_R^{s-2r}  \left(\frac{rst}{(t-\gamma)\gamma(s-\gamma)}\right)^{3/2}\int_{\tfrac{3}{2\sqrt{2}}\tfrac{\log(t)}{t}\gamma+\Gamma_{t,\alpha}(\gamma)}^{+\infty}\\
    &\qquad \qquad\qquad\qquad \left(y-\tfrac{3}{2\sqrt{2}}\tfrac{\log t}{t}\gamma - \Gamma_{t,\alpha}(\gamma)\right)^2\left(y-\tfrac{3}{2\sqrt{2}}\tfrac{\log s}{s}\gamma \right)e^{-\sqrt{2}y}\mathop{}\!\mathrm{d}y \mathop{}\!\mathrm{d}\gamma\\
     &= Cr^{2\sqrt{2}\rho}\int_R^{s-2r}  \left(\frac{rste^{-\tfrac{\log t}{t}\gamma}}{(t-\gamma)\gamma(s-\gamma)}\right)^{3/2}\int_{\Gamma_{t,\alpha}(\gamma)}^{+\infty}\\
     &\qquad\qquad\qquad\left(y - \Gamma_{t,\alpha}(\gamma)\right)^2\left(y+\tfrac{3}{2\sqrt{2}}\left(\tfrac{\log t}{t}-\tfrac{\log s}{s}\right)\gamma\right)e^{-\sqrt{2}y}\mathop{}\!\mathrm{d}y \mathop{}\!\mathrm{d}\gamma.
    \end{aligned}
\end{equation}
Observe that for $r$ large enough, any $t\geq 3r$ and \( \gamma \in [r, t - r] \), we have
\begin{equation*}
    \log t \leq \tfrac{\gamma}{t}\log t + \log(t - \gamma).
\end{equation*}
Also, for $r$ large enough,
\begin{equation}
\begin{aligned}
    \int_{\Gamma_{t,\alpha}(\gamma)}^{+\infty}&\left(y - \Gamma_{t,\alpha}(\gamma)\right)^2\left(y+\tfrac{3}{2\sqrt{2}}\left(\tfrac{\log t}{t}-\tfrac{\log s}{s}\right)\gamma\right)e^{-\sqrt{2}y}\mathop{}\!\mathrm{d}y\\
    &\leq \int_{\Gamma_{t,\alpha}(\gamma)}^{+\infty}\left(y - \Gamma_{t,\alpha}(\gamma)\right)^2ye^{-\sqrt{2}y}\mathop{}\!\mathrm{d}y\leq \Gamma_{t,\alpha}(\gamma)e^{-\sqrt{2}\Gamma_{t,\alpha}(\gamma)}.
\end{aligned}
\end{equation}
Therefore, for $r$ large enough, the quantity \eqref{obu4} is smaller than
\begin{equation}\label{fg5}
\begin{aligned}
    Cr^{2\sqrt{2}\rho}\int_R^{s-2r}&  \left(\frac{rs}{\gamma(s-\gamma)}\right)^{3/2}\Gamma_{t,\alpha}(\gamma)e^{-\sqrt{2}\Gamma_{t,\alpha}(\gamma)}d\gamma\\
    &\leq Cr^{2\sqrt{2}\rho}\tfrac{2}{\alpha}\left(r^{3/2}\frac{e^{-\sqrt{2}R^\alpha}}{\sqrt{R}}+(t-s)e^{-\sqrt{2}(t-s)^\alpha}\right),
\end{aligned}
\end{equation}
by a standard Laplace method.

We now bound the contribution to \eqref{eq:split-term-pre} coming from the region $\gamma \ge s-2r$.
There exist constants $C>0$ and $r_0 = r_0({x,}\alpha, \rho)$ such that, for all $r \ge r_0$,

\begin{equation}\label{obu5}
    \begin{aligned}
&\int_{\gamma \ge s-2r} (\cdots)\, d\gamma \le Cr^{2\sqrt{2}\rho}\int_{s-2r}^s e^{s+t-\gamma} \int_{\tfrac{3}{2\sqrt{2}}\tfrac{\log(t)}{t}\gamma+\Gamma_{t,\alpha}(\gamma)}^{+\infty} p_\gamma\left(\sqrt{2}\gamma - y\right)\\ 
&\qquad \P\left(\forall \phi \in [r, \gamma], \, B_\phi \geq \tfrac{3}{2\sqrt{2}}\tfrac{\log(t)}{t}\phi +\Gamma_{t,\alpha}(\phi) \mid B_\gamma = y \right)\P_{\sqrt{2}\gamma-y}\left( B_{s-\gamma} \ge m_s {+x}-\rho \log r \right)\\
        &\qquad \times \P_{\sqrt{2}\gamma-y}\left( B_{t-\gamma} \ge m_t {+x}-\rho \log r, \, \forall \phi \in [\gamma,t-r], \, B_{\phi-\gamma} \leq \frac{m_t}{t}\phi-\Gamma_{t,\alpha}(\phi) \right)\mathop{}\!\mathrm{d}y \mathop{}\!\mathrm{d}\gamma\\
    &\leq C r^{2\sqrt{2}\rho+1}\int_{s-2r}^s \left(\frac{st}{(t-\gamma)\gamma}\right)^{3/2}\int_{\tfrac{3}{2\sqrt{2}}\tfrac{\log(t)}{t}\gamma+\Gamma_{t,\alpha}(\gamma)}^{+\infty}\frac{\Big(y-\tfrac{3}{2\sqrt{2}}\tfrac{\log t}{t}\gamma - \Gamma_{t,\alpha}(\gamma)\Big)^2}{(s-\gamma)^{1/2}}\\
    &\qquad \qquad\qquad\qquad\qquad\qquad\qquad e^{-\sqrt{2}y}\mathop{}\!\mathrm{d}y \mathop{}\!\mathrm{d}\gamma\\
     &\leq Cr^{2\sqrt{2}\rho +1}\int_{s-2r}^s \frac{s^{3/2}}{\gamma^{3/2}(s-\gamma)^{1/2}}\int_{0}^{+\infty}y^2e^{-\sqrt{2}y-\sqrt{2}\Gamma_{t,\alpha}(\gamma)}\mathop{}\!\mathrm{d}y \mathop{}\!\mathrm{d}\gamma\\
     &\leq Cr^{2\sqrt{2}\rho +1}\tfrac{3}{\sqrt{2}}\int_{s-2r}^s \frac{e^{-\sqrt{2}\Gamma_{t,\alpha}(\gamma)}}{(s-\gamma)^{1/2}} \mathop{}\!\mathrm{d}\gamma \leq  6Cr^{2\sqrt{2}\rho+3/2}e^{-\sqrt{2}\min(t-s,R)^\alpha}.
    \end{aligned}
\end{equation}
Summing the bounds obtained in \eqref{fg5} and \eqref{obu5} yields the RHS of \eqref{eq:path-localization-br-E}.

\end{proof}

\section{Discussion and related works}\label{sec:discussion}

\medskip

\noindent\textbf{Genealogy of the front of BBM.} 
Arguin, Bovier, and Kistler \cite{Arguin2016} showed that two extremal particles at time $t$ either branch very early or very recently. 
Later, \cite{flath1} established that particles chosen randomly within a sublinear distance of the front at time $t$ necessarily branched early. 
Theorem \ref{thm:earlybr} extends this picture to different times: if $t-s$ is large, extremal particles at times $t$ and $s$ must have branched early. 
This is consistent with \cite{Arguin2016}, since for bounded $t-s$, continuity still permits recent branching.

\medskip
\noindent\textbf{Ergodic convergence of functionals of extremal particles.}
Theorem~\ref{thm:functional_ergodic} covers scalar functionals of the recentred maximum. The same negative correlation principle extends to multi-particle functionals of the front, such as the Laplace functional studied in~\cite{arguibextremalergo}:
\[
\mathcal L_{t}(f)
:= \exp\!\Big(-\!\int_{\mathbb R} f(y)\,\mathcal E_{t}(dy)\Big),\quad
\mathcal E_{t}
:= \sum_{u\in\mathcal N_t} \delta_{\,X_u(t)-m_t},
\quad f\in C_c^+(\mathbb R).
\]
Let
\[
E:=\left\{\forall (u,v)\in\mathcal N_s(x)\times \mathcal N_t(x):\ Q(u,v)\le R\right\}.
\]
A negative-correlation result analogous to Proposition~\ref{thm:decorrelation} holds:
\[
        \P\!\left(\mathcal L_{s}(f)\ge x,\, \mathcal L_{t}(f)\ge y,\, E \,\middle|\, \mathcal{F}_{R}\right)
        \le
        \P\!\left(\mathcal L_{s}(f)\ge x \,\middle|\, \mathcal{F}_{R}\right)
        \P\!\left(\mathcal L_{t}(f)\ge y \,\middle|\, \mathcal{F}_{R}\right),
\]
so that the proof of Theorem~\ref{thm:fluctuations} carries over verbatim for $\mathcal L_{t}(f)$. The identification of the conditional limit was established in Proposition~9 of~\cite{arguibextremalergo}.

More generally, Theorem~\ref{thm:fluctuations} carries over to any reasonable functional of the front for which the analogue of~\eqref{eq:deco} holds. To complete an ergodic theorem for such a functional, it remains to identify the conditional limit; for this, it is useful to observe that, by combining Lemmas~\ref{gih} and~\ref{subtoful}, one may restrict one's attention to localised particles.
The negative correlation inequality (Proposition~\ref{thm:decorrelation}) also applies to $d$-dimensional branching Brownian motion, with the Euclidean norm replacing the absolute value. An ergodic theorem in that setting would further require generalizing the early branching estimate and studying the local fluctuations in higher dimension.


\medskip
\noindent\textbf{Almost sure ergodic vs almost sure pathwise localization.} Theorem \ref{thm:ergodic} was first established in \cite{arguin2012ergodic}. A key step in their proof is Theorem 3 in \cite{arguin2012ergodic} (Theorem \ref{thm:fluctuations} here), supported by several technical lemmas including Lemma 7 in \cite{arguin2012ergodic}. The proof of Lemma 7 in \cite{arguin2012ergodic} relies on Proposition 6 in \cite{arguin2012ergodic}, which asserts the following: For $0<\alpha<1/2$, there exist $r_0>0$ and $\delta>0$ such that, for all $r\ge r_0$ and $t\ge 3r$,
\[
\mathbb{P}\!\Bigl( \exists\, u \in \mathcal{N}_t(\mathcal{D}) \text{ not $\alpha$-$t$-localised on } [\,r,\,t-r\,]\Bigr) 
\;\le\; e^{-r^\delta}.
\]
This estimate is strong enough for a Borel–Cantelli argument, which implies that for $r_t=(20\log t)^{1/\delta}$,
\[
\bbone\Bigl\{
    \exists\, u \in \mathcal{N}_t(\mathcal{D}) 
    \text{ not $\alpha$-$t$-localised on } [\,r_t,\,t-r_t\,]
\Bigr\}
\;\xrightarrow[t\to\infty]{\text{a.s.}}\; 0 .
\]
However, as suggested by the mechanism in the proof of 
Theorem~1.3 in \cite{flath1}, such a pathwise localisation does not hold almost surely (although it 
does in probability). Consequently, Proposition~6 cannot be correct as stated. Importantly, Lemma~7 
itself is formulated in terms of an \emph{ergodic} localisation rather than a pathwise localisation. 
The distinction is crucial: while almost sure pathwise localisation fails, almost sure ergodic
localisation remains valid. This can be proved combining Lemma~\ref{gih} and Lemma~\ref{subtoful} as in the proof of Theorem~\ref{thm:fluctuations}.


The issue in Proposition~6 originates from a bound in \cite{Arguin2016}. In equation~(5.55) there, the multiplicative factor \(e^{\kappa_3 r^{\alpha+\theta-1}}\) appearing in equation~(5.49) is omitted\, which is immaterial for their purposes. The precise bound is derived in Lemma~\ref{gih} by introducing two time scales \(r\) and \(R\) implementing the entropic repulsion argument from \cite{Arguin2016}. The bound in Lemma~\ref{gih} is  sufficient for Theorem 2.3 in \cite{Arguin2016} to hold. However, it is not strong enough to yield an almost sure path localization using a Borel-Cantelli type argument.

\section*{Acknowledgements}

I am grateful to Julien Berestycki for his advice, insightful suggestions, and for pointing out an initial error in my calculations that ultimately led to this project. I also thank Louis-Pierre Arguin for his encouragement, generous involvement, and many stimulating discussions. I am grateful to Pascal Maillard for carefully reading an earlier version of the paper, and to Cole Graham for his questions regarding unbounded functionals, which led me to clarify and extend the corresponding results.

\bibliographystyle{plain}
\bibliography{biblio}

\end{document}